\newtheorem{thm}{Theorem}[section]
\newtheorem{proposition}[thm]{Proposition}
\newtheorem{remark}[thm]{Remark}
\newtheorem{definition}[thm]{Definition}
\newtheorem{corollary}[thm]{Corollary}
\newtheorem{lemma}[thm]{Lemma}
\newtheorem{assumption}[thm]{Assumption}
\newcommand{\qed}{\hfill\rule{2mm}{3mm}\vspace{4mm}}
\def\cB{\mbox{${\cal B}$}}
\def\cF{\mbox{${\cal F}$}}
\def\cH{\mbox{${\cal H}$}}
\def\cL{\mbox{${\cal L}$}}
\def\cG{\mbox{${\cal G}$}}
\def\cT{\mbox{${\cal T}$}}
\def\proof{\noindent {\bf Proof. }\ }
\def\wh{\widehat}
\def\qed{\hfill $\square$ \bigskip}
\def\beq{\begin{equation}}               
\def\eeq{\end{equation}}                 
\def\bea{\begin{eqnarray}}             
\def\eea{\end{eqnarray}}               
\def\be*{\begin{eqnarray*}}             
\def\ee*{\end{eqnarray*}}               
\def\ba{\begin{array}}                  
\def\ea{\end{array}}                    
\def\;{\vspace{3mm} \\ }
\def\Q{\rm Q}
\def\N{\mathbb{N} }
\def\H{\mathbb{H} }
\def\K{\bf\rm{K} }
\def\R{\mathbb{R}}
\def\E{\mathbb{E} }
\def\P{\mathbb{P} }
\def\LL{\mathbb{L} }
\def\~{\widetilde}
\def\F{{\cal F}}
\def\eps{\varepsilon}
\def\beqlb{\begin{eqnarray}} \def\eeqlb{\end{eqnarray}}
\def\beqnn{\begin{eqnarray*}} \def\eeqnn{\end{eqnarray*}}
\def\<{\langle}  \def\>{\rangle}
\def\wh{\widehat}
\def\A{{\cal A}}  \def\L{{\cal L}}
\def\bde{\begin{definition}}
\def\ede{\end{definition}}
\def\bth{\begin{thm}}
\def\eth{\end{thm}}
\def\bpr{\begin{proposition}}
\def\epr{\end{proposition}}
\def\ble{\begin{lemma}}
\def\ele{\end{lemma}}
\def\bcor{\begin{corollary}}
\def\ecor{\end{corollary}}
\def\bre{\begin{remark}}
\def\ere{\end{remark}}
\begin{document}

\def\ee{\varepsilon}
\def\qed{{\hfill $\Box$ \bigskip}}
\def\MM{{\cal M}}
\def\BB{{\cal B}}
\def\LL{{\cal L}}
\def\FF{{\cal F}}
\def\EE{{\cal E}}
\def\QQ{{\cal Q}}
\def\AA{{\cal A}}

\def\R{{\bf R}}
\def\N{{\mathbb N}}
\def\L{{\mathbb L}}
\def\E{{\bf E}}
\def\F{{\bf F}}
\def\H{{\bf H}}
\def\P{{\bf P}}
\def\Q{{\bf Q}}
\def\S{{\bf S}}
\def\J{{\bf J}}
\def\K{{\bf K}}
\def\F{{\bf F}}
\def\A{{\bf A}}
\def\loc{{\bf loc}}
\def\eps{\varepsilon}
\def\semi{{\bf semi}}
\def\wh{\widehat}
\def\pf{\noindent{\bf Proof.} }
\def\dim{{\rm dim}}

\title{\Large \bf $L\log L$ condition for  supercritical branching\\  Hunt processes }
\author{ \bf Rong-Li Liu, \hspace{1mm}\hspace{1mm} Yan-Xia
Ren\footnote{The research of this author is supported by NSFC
(Grant No.  10871103 and 10971003)\hspace{1mm} } \hspace{1mm}\hspace{1mm} and
Renming Song\hspace{1mm} }

\date{}
\maketitle

{\narrower{\narrower

\centerline{\bf Abstract}

\bigskip

In this paper we use the spine decomposition and
martingale change of measure to establish a Kesten-Stigum $L\log
L$ theorem for branching Hunt processes. This result is a
generalization of the results in \cite{AH} and \cite{H} for
branching diffusions.

\bigskip

\noindent {\bf AMS Subject Classifications (2000)}: Primary 60J80,
  60F15; Secondary 60J25

\medskip

\noindent{\bf Keywords and Phrases}: Hunt processes, branching Hunt
processes, Kesten-Stigum Theorem, martingales, martingale change of
measure.

\par}\par}

\bigskip

\begin{doublespace}

\section{Introduction}
  \setcounter{equation}{0}

Suppose that $\{Z_n; n\ge 1\}$ is a Galton-Watson process with each
particle having probability $p_n$ of giving birth to $n$ offspring.
Let $L$ stand for a random variable with this offspring
distribution. Let $m:=\sum^{\infty}_{n=1}np_n$ be the mean number of
offspring per particle. Then $Z_n/m^n$ is a non-negative martingale.
Let $W$ be the limit of $Z_n/m^n$ as $n\to\infty$. Kesten and Stigum
proved in \cite{KS} that if $1<m<\infty$ ( that is, in the
supercritical case) then $W$ is non-degenerate (i.\/e., not almost
surely zero) if and only if
\begin{equation}\label{LLogL-GW}
E(L\log ^+L))=\sum^{\infty}_{n=1}p_nn\log n<\infty,
\end{equation}
here, and in the rest of this paper, we use the notation that
$\log^+r=0\vee\log r$ for all $r> 0$.
This result is usually referred to  the Kesten-Stigum $L\log L$
theorem.

In 1995, Lyons, Pemantle and Peres developed a martingale change of
measure method in \cite{LPP} to give a new proof for the
Kesten-Stigum $L\log L$ theorem for single type branching processes.
Later this method was extended to prove the $L\log L$ theorem for
multiple and general multiple type branching processes in \cite{BK},
\cite{KLPP} and \cite{L}.

In this paper we will extend this method to supercritical branching
Hunt processes and establish an $L\log L$ criterion for branching
Hunt processes. To review the known results and  state our main
result, we need to introduce the setup we are going to work with
first.

In this paper $E$ always stands for a locally compact separable
metric space. We will use $E_{\Delta}:=E\cup\{\Delta \}$ to denote
the one-point compactification of $E$. We will use ${\cal B}(E)$ and
${\cal B}(E_{\Delta})$ to denote the Borel $\sigma$-fields on $E$
and $E_{\Delta}$, respectively. $\cB_b(E)$( respectively,
$\cB^+(E)$) will denote the set of all bounded (respectively,
non-negative) $\cB(E)$-measurable functions on $E$. All functions
$f$ on $E$ will be automatically extended to $E_{\Delta}$ by setting
$f(\Delta)=0$. Let ${\bf M}_p(E)$ be the space of finite point
measures on $E$, that is, measures of the form $\mu=\sum^n_{i=1}
\delta_{x_i}$ where $n=0, 1, 2, \dots$ and $x_i\in E, i=1, \dots,
n$. (When $n=0$, $\mu$ is the trivial zero measure.) For any
function $f$ on $E$ and any measure $\mu \in {\bf M}_p(E)$ we use
$\<f, \mu\>$ to denote the integral of $f$ with respect to $\mu$.

We will always assume that $Y=\{Y_t, \Pi_x, \zeta\}$ is a Hunt process
on $E$, where $\zeta=\inf\{t>0:\,  Y_t=\Delta\}$ is the lifetime of
$Y$. Let $\{P_t, t\ge 0\}$ be the transition semigroup of $Y$:
$$
P_tf(x)=\Pi_x[f(Y_t)]\quad \mbox{ for }f\in{\cal B}^+(E).
$$
Let $m$ be a positive Radon measure on $E$ with full support.
$\{P_t, t\ge 0\}$ can be extended to a strongly continuous semigroup
on $L^2(E, m)$. Let $\{\hat P_t, t\ge 0\}$ be the semigroup on
$L^2(E, m)$ such that
$$
\int_Ef(x)P_tg(x)m(dx)=\int_Eg(x)\hat P_tf(x)m(dx),\quad f,g\in
L^2(E,m).
$$
We will use ${\bf A}$ and $\hat{\bf A}$ to denote the generators of
the semigroups $\{P_t\}$ and $\{\widehat P_t\}$ on $L^2(E, m)$
respectively.

Throughout this paper we assume that

\begin{assumption}\label{assume1}
(i) There exists a family of continuous strictly positive functions
$\{p(t,\cdot,\cdot); t>0\}$ on $E\times E$ such that for any
$(t,x)\in (0,\infty)\times E$, we have
$$
P_tf(x)=\int_Ep(t, x, y)f(y)m(dy),\quad \hat P_tf(x)=\int_Ep(t, y,
x)f(y)m(dy).
$$
(ii) The semigroups $\{P_t\}$ and $\{\hat P_t\}$ are
ultracontractive, that is, for any $t>0$, there exists a constant
$c_t>0$ such that
$$p(t, x, y)\le c_t\quad\mbox{ for any } (x,y)\in E\times E.$$
\end{assumption}

Consider a branching system determined by the following three
parameters:
\begin{description}
\item{(a)} a Hunt process $Y=\{Y_t, \Pi_{x},\zeta\}$
with state space  $E$;
\item{(b)} a nonnegative bounded measurable function $\beta$ on $E$;
\item{(c)} an offspring distribution $\{(p_n(x))_{n=0}^\infty;\, x\in E\}$.
\end{description}
Put \beq \psi(x,z)=\sum^\infty_{n=0}p_n(x)z^n,\quad z\geq 0. \eeq
$\psi$ is the generating function for the number of offspring
generated at point $x$.

This branching system is characterized by the following properties:
\begin{description}
\item{(i)} Each particle has a random birth and a random death
time. \item{(ii)} Given that a particle is born at $x\in E$, the
conditional distribution of its path after birth is determined by
$\Pi_{x}$. \item{(iii)} Given the path $Y$ of a particle and given
that the particle is alive at time $t$, its probability of dying in
the interval $[t, t + dt)$ is $\beta(Y_t)dt + o(dt)$.
\item{(iv)} When a particle dies at $x\in E$, it
splits into $n$ particles at $x$ with probability $p_n(x)$.
\item{(v)} The point $\Delta$ is a cemetery. When a particle reaches
$\Delta$, it stays at $\Delta$ for ever and there is no branching at
$\Delta$.
\end{description}

We assume that the functions $p_n(x)$, $n=0,1,\cdots$, and
$A(x):=\psi'(x,1)=\sum^\infty_{n=0}np_n(x)$ are bounded
$\cB(E)$-measurable and that $p_0(x)+p_1(x)=0$ on  $E$.
The last condition implies $A(x)\geq 2$ on $E$. The assumption
$p_0(x)=0$ on $E$ is essential for the probabilistic proof of this paper
since we need the spine to be defined for all $t\ge 0$. The assumption
$p_1(x)=0$ on $E$ is just for convenience as the case $p_1(x)>0$
can be reduced to the case $p_1(x)=0$ by changing the parameters $\beta$
and $\psi$ of the branching Hunt process.

For any $c\in\cB_b(E)$, we define
$$
e_c(t)=\exp\left(-\int^t_0c(Y_s)ds\right).
$$
Let $X_t(B)$ be the number of particles located in $B\in\cB(E)$ at
time $t$. Then $\{X_t, t\ge 0\}$ is a Markov process in ${\bf
M}_p(E)$. This process is called a $(Y, \beta, \psi)$-branching Hunt
process. For any $\mu\in{\bf M}_p(E)$, let $P_{\mu}$ be the law of
$\{X_t, t\ge 0\}$ when $X_0=\mu$. Then we have
 \beq
P_{\mu}\exp\<-f, X_t\>=\exp\<\log u_t(\cdot),\mu\>
 \eeq
where $u_t(x)$ satisfies the equation
 \beq\label{int}
u_t(x)=\Pi_{x}\left[e_{\beta}(t)\exp(-f( Y_t))+\int^t_0
e_{\beta}(s)\beta(Y_s)\psi( Y_s, u_{t-s}(
Y_s))ds\right]\quad\mbox{for }  t\ge 0.
 \eeq

The formula \eqref{int} deals with a process started at time $0$
with one particle located at $x$, and it has a clear heuristic
meaning: the first term in the brackets corresponds to the case when
the particle is still alive at time $t$; the second term corresponds
to the case when it dies before $t$. The formula \eqref{int} implies
that
 \beq\label{int2}
u_t(x)=\Pi_{x}\int^t_0\left[\psi(Y_s,u_{t-s}(Y_s))-u_{t-s}(Y_s)\right]\beta(Y_s)ds+
\Pi_{x}\exp(-f(Y_t))\quad\mbox{for } t\ge 0
 \eeq
(see \cite[Section 2.3]{Dy}).
For any $\mu\in{\bf M}_p(E), f\in {\cal B}_b^+ (E)$ and $t\geq 0$,
we have
\begin{eqnarray}\label{expX}
P_{\mu} \left[\langle  f, X_t\rangle\right]
=\Pi_{\mu}\left[e_{(1-A)\beta}(t)f(Y_t)\right].
\end{eqnarray}

Let $\{P^{(1-A)\beta}_t,t\ge 0\}$ be the Feynman-Kac semigroup
defined by
$$
P^{(1-A)\beta}_tf(x):=\Pi_x\left[f(Y_t)\
e_{(1-A)\beta}(t)f(Y_t)\right], \qquad
f\in{\cal B}(E).
$$
Let $\{\widehat P^{(1-A)\beta}_t,t\ge 0\}$ be the dual semigroup of
$\{P^{(1-A)\beta}_t,t\ge 0\}$ on $L^2(E, m)$.

Under Assumption \ref{assume1}, we can easily show that the
semigroups $\{P^{(1-A)\beta}_t\}$ and $\{\widehat
P^{(1-A)\beta}_t\}$ are strongly continuous on $L^2(E, m)$. Moreover,
$P^{(1-A)\beta}_t$ admits a density $p^{(1-A)\beta}(t,x,y)$ that is
jointly continuous in $(x, y)$ for each $t>0$:
$$
P^{(1-A)\beta}_tf(x) =\int_E p^{(1-A)\beta}(t,x,y)f(y)m(dy),\quad\mbox{ for
every } f\in\cB^+(E).
$$
The generators of $\{P^{(1-A)\beta}_t\}$ and $\{\widehat
P^{(1-A)\beta}_t\}$ can be formally written as $\A+(A-1)\beta$ and
$\widehat{\A}+(A-1)\beta$ respectively.

Let $\sigma(\A+(A-1)\beta)$ and $\sigma(\widehat{\A}+(A-1)\beta)$
denote the spectrum of operator $\A+(A-1)\beta$ and
$\widehat{\A}+(A-1)\beta$, respectively. It follows from Jentzch's
Theorem (Theorem V.6.6 on page 333 of \cite{Sc} ) and the strong
continuity of $\{P^{(1-A)\beta}_t\}$ and $\{\widehat
P^{(1-A)\beta}_t\}$ that the common value $\lambda_1:= \sup {\rm
Re}(\sigma(\A+(A-1)\beta))= \sup {\rm
Re}(\sigma(\widehat{\A}+(A-1)\beta))$ is an eigenvalue of
multiplicity 1 for both $\A+(A-1)\beta$ and
$\widehat{A}+(A-1)\beta$, and that an eigenfunction $\phi$ of
$\A+(A-1)\beta$ associated with $\lambda_1$ can be chosen to be
strictly positive a.e. on $E$ and an eigenfunction
$\widetilde{\phi}$ of $\widehat{\A}+(A-1)\beta$ associated with
$\lambda_1$ can be chosen to be strictly positive a.e. on $E$.
By \cite[Proposition 2.3]{KS1} we know that $\phi$ and
$\widetilde{\phi}$ are strictly positive and continuous on $E$. We
choose $\phi$ and $\widetilde{\phi}$ so that
$\int_{E}\phi(x)\widetilde{\phi}(x)m(dx)=1$. Then
\begin{equation}\label{invar}
\phi(x)=e^{-\lambda_1t}P^{(1-A)\beta}_t\phi(x),\quad
\widetilde\phi(x)=e^{-\lambda_1t}\widehat
P^{(1-A)\beta}_t\widetilde{\phi},\quad x\in E.
\end{equation}

Throughout this paper we assume that

\begin{assumption}\label{assume2}$\lambda_1>0$.\end{assumption}

The above assumption is the condition for the supercriticality of the branching
Hunt process. Indeed, if $\lambda_1< 0$, it is easy to see that extinction
occurs almost surely from the martingale $M_t(\phi)$ defined below.

Let ${\cal E}_t=\sigma(Y_s;\ s\leq t)$. Note that
$$
\frac{\phi(Y_{t})}{\phi(x)}e^{-\lambda_1t}e_{(1-A)\beta}(t)
$$
is a martingale under $\Pi_x$, and so we can define a martingale
change of measure by
$$
\frac{d\Pi_x^\phi}{d\Pi_x}\Big|_{\mathcal{E}_t}= \frac{\phi(Y_{t})}
{\phi(x)}e^{-\lambda_1t}e_{(1-A)\beta}(t).
$$
Then $\{Y,\ \Pi_x^\phi\}$ is a conservative Markov process,
and $\phi\widetilde{\phi}$ is
the unique invariant probability density for the semigroup $P^{(1-A)\beta}_t$,
that is, for any $f\in \cB^+(E)$,
$$
\int_E\phi(x)\widetilde{\phi}(x)P^{(1-A)\beta}_tf(x)m(dx)=\int_Ef(x)\phi(x)\widetilde{\phi}(x)m(dx).
$$
Let $p^\phi(t, x,y)$ be the transition density of $Y$ in $E$ under
$\Pi^\phi_x$. Then
$$
p^\phi(t,x,y)=\frac{e^{-\lambda_1t}}{\phi(x)}\ p^{(1-A)\beta}(t, x,
y)\ \phi(y).
$$

Throughout this paper, we assume the following

\begin{assumption}\label{assume3}
The semigroups $\{P^{(1-A)\beta}_t\}$ and $\{\widehat
P^{(1-A)\beta}_t\}$ are  intrinsic ultracontractive , that is, for
any $t > 0$ there exists a constant $c_t$ such that
$$
p^{(1-A)\beta}(t, x, y)\le c_t\phi(x)\widetilde{\phi}(y), \quad x, y
\in E.
$$
\end{assumption}

\begin{remark} Here are some examples of Hunt processes
satisfying Assumptions \ref{assume1} and \ref{assume3}.

(1) Suppose $E=D$, a domain in $\mathbb{R}^d$, and $m$ is the
Lebesgue measure on $D$. If $\{Y,\ \Pi_x,\ x\in D\}$ is a diffusion
killed upon leaving $D$ with generator
$$
\A=\frac{1}{2}\nabla\cdot a\nabla+b\cdot \nabla
$$
where $(a_{ij}(x))_{ij}$ is uniformly elliptic and  bounded with
$a_{ij}, i,j=1,\cdots, d$, being bounded functions in
$C^1(\mathbb{R}^d)$ such that all their first partial derivatives
are bounded, and  $b_i, i=1, \cdots, d$,  are bounded Borel
functions on $\mathbb{R}^d$. It was proven in \cite{KS1} and
\cite{KS2} that the semigroups $\{P^{(1-A)\beta}_t\}_{t\ge 0}$ and
$\{\widehat P^{(1-A)\beta}_t\}$ are intrinsic ultracontractive when
$D$ is a bounded Lipschitz domain. For more general conditions on
$D$ and the coefficients for $\{P^{(1-A)\beta}_t\}_{t\ge 0}$ and
$\{\widehat P^{(1-A)\beta}_t\}$ to be intrinsic ultracontractive,
see \cite{KS2}.

(2) Suppose $E=D$, a bounded open set in $\mathbb{R}^d$, and $m$ is
the Lebesgue measure on $D$. If $\{Y,\ \Pi_x,\ x\in D\}$ is a
symmetric $\alpha-$stable process killed upon exiting $D$, where
$0<\alpha<2$,  then it follows from \cite{CS1} and \cite{Ku1} that
the semigroups $\{P^{(1-A)\beta}_t\}_{t\ge 0}$ and $\{\widehat
P^{(1-A)\beta}_t\}_{t\ge 0}$ are intrinsic ultracontractive.

(3) For examples in which $E$ is unbounded, see \cite{KB} and \cite{Kw}.

(4) For more examples of discontinuous Markov processes satisfying
Assumptions \ref{assume1} and \ref{assume3}, we refer our readers to
\cite{KS3} and the references therein.
\end{remark}

It follows from \cite[Theorem 2.8]{KS1} that
\begin{equation}
\left|\frac{e^{-\lambda_1t}p^{(1-A)\beta}(t, x,y)} {\phi(x)
\tilde\phi(y)}-1\right|\le c\,e^{-\nu t},\quad x\in E,
\end{equation}
for some positive constants $c$ and $\nu$, which is equivalent to
\begin{equation}\label{IU}
\sup_{x\in E}\left|\frac{p^\phi(t, x,y)}{\phi(y) \tilde\phi(y)}-
1\right|\le c\,e^{-\nu t}.
\end{equation}
Thus for any $f\in\cB^+_b(E)$ we have
$$
\sup_{x\in E}\left|\int_E p^\phi(t, x, y)f(y)m(dy) - \int_E\phi(y)
\tilde\phi(y)f(y)m(dy)\right|\le c\,e^{-\nu t}\int_E\phi(y)
\tilde\phi(y)f(y)m(dy).
$$
Consequently we have
\begin{equation}\label{u-convergent'}
\lim_{t\to\infty}\displaystyle\frac{\int_E p^\phi(t, x, y)
f(y)m(dy)}{\int_E\phi(y) \tilde\phi(y)f(y)m(dy)}=1,\quad \mbox{
uniformly for } f\in\cB^+_b(E)\mbox{ and }x\in E.
\end{equation}

For any nonzero measure $\mu\in {\bf M}_p(E)$, we define
$$
M_t(\phi):=e^{-\lambda_1 t}\frac{\langle\phi,
X_t\rangle}{\langle\phi, \mu\rangle}\qquad t\geq 0.
$$

\begin{lemma}\label{l:1.5}  For any nonzero measure $\mu\in {\bf M}_p(E)$,
$M_t(\phi)$ is a nonnegative martingale under $P_{\mu}$, and
therefore there exists a limit $ M_{\infty}(\phi)\in[0,\infty)$,
$P_{\mu}$-a.s.
\end{lemma}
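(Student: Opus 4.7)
The plan is to verify the martingale property by a direct computation using the expectation formula \eqref{expX} together with the eigenfunction equation \eqref{invar}, and then to quote the standard convergence theorem for nonnegative martingales.

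First I would check the integrability and key one-step identity. Starting from \eqref{expX}, for any $\nu\in{\bf M}_p(E)$ and $s\ge 0$,
\[
P_{\nu}\bigl[\<\phi,X_s\>\bigr]=\Pi_{\nu}\bigl[e_{(1-A)\beta}(s)\phi(Y_s)\bigr]=\<P^{(1-A)\beta}_s\phi,\nu\>=e^{\lambda_1 s}\<\phi,\nu\>,
\]
where the last equality uses \eqref{invar}. Since $\phi$ is continuous and $\nu$ is a finite point measure, $\<\phi,\nu\><\infty$, so this quantity is finite. One technical point: \eqref{expX} is stated for $f\in\cB^+_b(E)$, so to apply it to $\phi$ (which need not be bounded) I would truncate $\phi$ by $\phi\wedge n$, invoke \eqref{expX} for each truncation, and pass to the limit by monotone convergence on both sides.

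Next I would combine this with the branching Markov property. Let $\F_t$ denote the natural filtration of $\{X_t\}$. Conditionally on $\F_t$, the system evolves as a sum of independent branching processes started from each particle alive at time $t$; writing $X_t=\sum_i\delta_{x_i^t}$ and using the identity above with $\nu=\delta_{x_i^t}$,
\[
P_{\mu}\bigl[\<\phi,X_{t+s}\>\bigm|\F_t\bigr]=\sum_i P_{\delta_{x_i^t}}\bigl[\<\phi,X_s\>\bigr]=e^{\lambda_1 s}\sum_i\phi(x_i^t)=e^{\lambda_1 s}\<\phi,X_t\>.
\]
Multiplying both sides by $e^{-\lambda_1(t+s)}/\<\phi,\mu\>$ gives
\[
P_{\mu}\bigl[M_{t+s}(\phi)\bigm|\F_t\bigr]=M_t(\phi),
\]
which is precisely the martingale property. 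Nonnegativity is immediate since $\phi>0$.

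Finally, since $M_t(\phi)\ge 0$ is an $(\F_t,P_{\mu})$-martingale, the martingale convergence theorem yields the existence of $M_{\infty}(\phi)\in[0,\infty)$ with $M_t(\phi)\to M_{\infty}(\phi)$ $P_{\mu}$-a.s. The only mildly delicate step in the whole argument is the monotone-convergence justification that extends \eqref{expX} to the (possibly unbounded) eigenfunction $\phi$; the remainder is a routine application of the branching property and \eqref{invar}.
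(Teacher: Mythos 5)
Your proof is correct and follows essentially the same route as the paper: apply the Markov/branching property of $X$, use \eqref{expX} to compute the conditional expectation, and invoke the eigenfunction identity \eqref{invar}, then conclude by the convergence theorem for nonnegative martingales. Your extra caution about extending \eqref{expX} from bounded $f$ to $\phi$ is harmless but unnecessary here, since under Assumptions \ref{assume1}(ii) and \ref{assume3} the eigenfunction $\phi$ is in fact bounded (e.g.\ combine $\phi=e^{-\lambda_1 t}P^{(1-A)\beta}_t\phi$, $\phi\in L^2(E,m)$, and the ultracontractive bound $p^{(1-A)\beta}(t,x,y)\le C_t$).
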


\proof By the Markov property of $X$, \eqref{expX}  and
\eqref{invar},
\begin{eqnarray*}
P_{\mu} \left[M_{t+s}(\phi) \big| \cF_t\right] &=& \frac{1}{\<\phi,
\mu\>}e^{-\lambda_1 t} P_{X_t} \left[e^{-\lambda_1 s} \< \phi, X_s\>\right]\\
& =&\frac{1}{\<\phi, \mu\>}e^{-\lambda_1 t} \left\<  e^{-\lambda_1
s}  \Pi_{\cdot}\left[ e_{(1-A)\beta}(s)\phi(Y_s)\right], \, X_t\right\> \\
&=&\frac{1}{\<\phi, \mu\>}e^{-\lambda_1 t} \< \phi, \, X_t\> =
M_t(\phi).
\end{eqnarray*}
This proves that $\{M_t(\phi), t\ge 0\}$ is a non-negative
$P_\mu$-martingale and so it has an almost sure limit
$M_\infty(\phi)\in[0,\infty)$ as $t\to \infty$. \qed

In this paper we are concerned with the following classical
question: under what condition is the limit $M_\infty(\phi)$
non-degenerate (that is, $P_{\mu}(M_\infty(\phi)>0)>0$)? In
\cite{AH}, Asmussen and Hering  gave a criterion for
$M_{\infty}(\phi)$ to be non-degenerate for a general class of
branching Markov processes under regularity conditions. More
precisely, it was proved in \cite{AH} that if the underlying Markov
process $Y$ is positive regular (see \cite{AH} for the precise
definition), $M_{\infty}(\phi)$ is non-degenerate if and only if
 \beq\label{LlogL-t}
\int_E m(dy)\widetilde{\phi}(y)P_{\delta_y}\left[\<\phi,
X_t\>\log^+\<\phi, X_t\>\right]<\infty\quad \mbox{ for some }t>0.
 \eeq
This condition is not easy to verify since it involves the branching
process $X$ itself. It is more desirable to have a criterion in
terms of the natural model parameters $\A$, $\beta$ and $\{p_n(x)\}$
of the branching process. Such a criterion is found in \cite{AH} and
\cite{H} for branching diffusions and it was proved that, in the
case of branching diffusions on a bounded open set
$E\subset\mathbb{R}^d$ with $E$ being the union of  finite number of
bounded $C^3$-domains, $M_{\infty}(\phi)$ is non-degenerate if and
only if
\begin{equation}
\int_E\widetilde{\phi}(y)\beta(y)l(y)m(dy)<\infty.
\end{equation}
where
 \beq\label{def-l}l(x)=\sum_{k=2}^\infty k\phi(x)\log^+( k\phi(x))\,
p_k(x),\quad x\in E.
 \eeq
The arguments of \cite{AH} and \cite{H} are mainly analytic.

The purpose of this paper is two-folds. First, we generalize the
result above to general branching Hunt processes. Even in the case
of branching diffusions, our main result is more general than the
corresponding result in \cite{H} in some aspect since our
requirement on the regularity of the domain is very weak. Secondly,
we give a more probabilistic  proof of the result, using the spine
decomposition and martingale change of measure.
Our probabilistic proof is similar to the probabilistic proofs of
\cite{HH1}, \cite{K2} and \cite{LPP}.

The main result of this paper can be stated as follows.

\begin{thm}\label{maintheorem} Suppose that
$\{X_t; t\ge 0\}$ is a $(Y,\beta, \psi)$-branching Hunt process and
that Assumptions 1.1--1.3 are satisfied.
Then $M_\infty(\phi)$ is non-degenerate under $P_\mu$ for any
nonzero measure $\mu\in {\bf M}_p(E)$ if and only if
 \beq\label{LlogL-BH}
\int_E\widetilde{\phi}(x)\beta(x)l(x)m(dx)<\infty,
 \eeq
where $l$ is defined in \eqref{def-l}.
\end{thm}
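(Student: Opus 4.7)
The plan is to follow the Lyons--Pemantle--Peres (LPP) change-of-measure/spine-decomposition route, adapted to branching Hunt processes. The first step is to use the non-negative unit-mean martingale $M_t(\phi)$ to tilt $P_\mu$ to a new measure $\mathbb{Q}_\mu$ on a suitably enriched filtration (enlarged to track a distinguished genealogical line, the spine). Under $\mathbb{Q}_\mu$ one expects the classical spine description: (a) the spine particle $\xi$ moves as $\{Y,\Pi^\phi\}$; (b) branching along the spine occurs at the size-biased rate $A(\xi_t)\beta(\xi_t)$; (c) at each spine fission the number of offspring has the size-biased law $\widehat p_k(x)=kp_k(x)/A(x)$, one of which is picked uniformly to continue the spine; (d) the remaining offspring initiate independent copies of the original $(Y,\beta,\psi)$-branching Hunt process. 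This is verified by the standard many-to-one computation based on \eqref{expX} and $\phi(x)=e^{-\lambda_1 t}P^{(1-A)\beta}_t\phi(x)$, exactly as in \cite{HH1, K2, LPP}.

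Next, I invoke the general dichotomy for the non-negative martingale $M_t(\phi)$: since $1/M_t$ is a $\mathbb{Q}_\mu$-supermartingale, $M_\infty(\phi)$ exists $\mathbb{Q}_\mu$-a.s.\ in $[0,\infty]$, and $M_\infty(\phi)$ is non-degenerate under $P_\mu$ if and only if $\{M_t(\phi)\}$ is $P_\mu$-uniformly integrable, which in turn is equivalent (by Durrett-style Radon--Nikod\'ym arguments) to $M_\infty(\phi)<\infty$ $\mathbb{Q}_\mu$-a.s. So the theorem reduces to showing
\[
\mathbb{Q}_\mu\bigl(M_\infty(\phi)<\infty\bigr)=1 \iff \int_E\widetilde\phi(x)\beta(x)l(x)\,m(dx)<\infty.
\]

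For this equivalence I exploit the additive spine decomposition
\[
M_t(\phi)=\frac{1}{\<\phi,\mu\>}\Bigl(e^{-\lambda_1 t}\phi(\xi_t)+\sum_{\tau_i\le t}e^{-\lambda_1\tau_i}\<\phi,X^{(i)}_{t-\tau_i}\>\Bigr),
\]
where $\tau_i$ are the fission times along the spine and $X^{(i)}$ are the independent sub-trees initiated by the non-spine offspring at $\tau_i$. Conditionally on the spine $\sigma$-algebra $\mathcal{G}_\infty$ (which records $\xi$, $\{\tau_i\}$ and the offspring counts $\{N_i\}$), each sub-tree contribution is a scaled independent $P$-martingale whose conditional mean equals $(N_i-1)e^{-\lambda_1\tau_i}\phi(\xi_{\tau_i})/\<\phi,\mu\>$. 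Thus, by conditional Fatou,
$\mathbb{Q}_\mu[M_\infty(\phi)\mid \mathcal{G}_\infty]<\infty$ is controlled by the series $\sum_i e^{-\lambda_1\tau_i}N_i\phi(\xi_{\tau_i})$, and a standard Borel--Cantelli/truncation argument shows that $\mathbb{Q}_\mu$-a.s.\ finiteness of $M_\infty(\phi)$ is equivalent to $\sum_i \mathbf{1}_{\{e^{-\lambda_1\tau_i}N_i\phi(\xi_{\tau_i})>1\}}<\infty$, i.e.\ to $\mathbb{Q}_\mu$-summability of the tail probabilities $\mathbb{Q}_\mu(N_i\phi(\xi_{\tau_i})>e^{\lambda_1\tau_i}\mid \xi)$. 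Using the size-biased law of $N_i$ and Fubini, this translates into the a.s.\ convergence along the spine of
\[
\int_0^\infty A(\xi_s)\beta(\xi_s)\,\mathbb{Q}\bigl(\log^+(\widehat N\phi(\xi_s))>\lambda_1 s\,\big|\,\xi_s\bigr)\,ds,
\]
where $\widehat N$ has the size-biased offspring law at $\xi_s$. Summing the geometric tails in $s$ converts this to the integrability of $\log^+(k\phi(x))kp_k(x)$ weighted by $\beta$, which is exactly $l(x)\beta(x)$.

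The final step is to replace the path integral along the spine by a space integral against the invariant density $\phi\widetilde\phi$. This is where Assumption~\ref{assume3} enters decisively: the intrinsic ultracontractivity bound \eqref{IU} gives uniform exponential convergence of the $\Pi^\phi$-transition density to $\phi\widetilde\phi$, hence a ratio ergodic theorem along $\xi$. It lets me identify the a.s.\ finiteness of $\int_0^\infty \beta(\xi_s) l(\xi_s)/\phi(\xi_s)\,ds$ (the series above, after Fubini) with the condition $\int_E\widetilde\phi(x)\beta(x)l(x)\,m(dx)<\infty$, thereby closing the equivalence. I expect the main technical obstacle to be the rigorous construction of the spine-enriched probability space and the Girsanov identification of the size-biased rates of fission and offspring along the spine (for which the assumption $p_0=p_1\equiv 0$ is essential to keep the spine alive); once that is in place, the ergodic reduction from path-time integrals along $\xi$ to space integrals against $\widetilde\phi\,dm$ is the other delicate point, but it is handled cleanly by \eqref{IU}.
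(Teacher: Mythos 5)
Your overall strategy --- spine change of measure via $M_t(\phi)$, reduction to $\widetilde Q$-a.s.\ finiteness of $M_\infty(\phi)$ via the Radon--Nikod\'ym dichotomy, then control of the spine decomposition $\widetilde Q^x[\phi(x)M_t(\phi)\mid\widetilde\cG]=\phi(\widetilde Y_t)e^{-\lambda_1 t}+\sum_{u<\xi_{n_t}}(r_u-1)\phi(\widetilde Y_{\zeta_u})e^{-\lambda_1\zeta_u}$ --- matches the paper exactly. The gap is in the final step, where you pass from the path integral along the spine to the spatial integral $\int_E\widetilde\phi\beta\,l\,dm$. You write that ``after Fubini'' the relevant quantity is $\int_0^\infty\beta(\xi_s)\,l(\xi_s)/\phi(\xi_s)\,ds$ and that its a.s.\ finiteness is equivalent, via a ``ratio ergodic theorem,'' to $\int_E\widetilde\phi\beta\,l\,dm<\infty$. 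This is not correct. The indicator $I_{\{k\phi(\xi_s)>e^{\lambda_1 s}\}}$ cannot be integrated out in $s$ pathwise (it depends on $\xi_s$); the Fubini collapse to $\log^+(k\phi)$ is performed only after taking $\Pi^\phi$-expectation and using the intrinsic ultracontractivity bound \eqref{IU}. More seriously, $\int_0^\infty\beta(\xi_s)\,l(\xi_s)/\phi(\xi_s)\,ds$ is \emph{always} infinite $\Pi^\phi$-a.s.\ as soon as $\int_E\widetilde\phi\beta\,l\,dm>0$ (precisely because of the ratio ergodic theorem --- the time average of a positive functional of a Harris positive process is a.s.\ positive), so ``a.s.\ finiteness of this integral $\iff$ finiteness of the spatial integral'' cannot be the criterion. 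The correct pathwise object is $Z_\infty=\int_0^\infty\gamma(s,\widetilde Y_s)\,ds$ with the time-decaying kernel $\gamma(s,y)=\beta(y)\sum_k kp_k(y)I_{\{k\phi(y)>Ke^{\lambda_1 s}\}}$, and a naive ergodic theorem is of no help here because the integrand decays in $s$: $\Pi^\phi_x Z_\infty=\infty$ does \emph{not} by itself force $Z_\infty=\infty$ a.s.

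What actually closes the argument in the divergent case is a second-moment/zero--one law pair: one shows $\Pi^\phi_x(Z_T^2)\le C\,(\Pi^\phi_x Z_T)^2$ uniformly in $T$ (this is a real computation using the Markov property, \eqref{domi-p}, and monotonicity of $\gamma(\cdot,y)$ in time), applies the Paley--Zygmund inequality to get $\Pi^\phi_x(Z_\infty=\infty)>0$, and then upgrades to probability one because $\{Z_\infty=\infty\}$ is shift-invariant and $\widetilde Y$ under $\Pi^\phi$ is Harris recurrent (a consequence of \eqref{IU}). These two ingredients --- the uniform second-moment bound and the Harris zero--one law --- are the technical heart of the divergent case and are missing from your outline; the ``ratio ergodic theorem along $\xi$'' does not substitute for them. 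Similarly, in the convergent case the reduction to $\sum_i\mathbf{1}_{\{e^{-\lambda_1\zeta_i}r_i\phi(\widetilde Y_{\zeta_i})>1\}}<\infty$ is incomplete: one must also verify that the sum of the ``small'' terms $\sum_i e^{-\lambda_1\zeta_i}r_i\phi(\widetilde Y_{\zeta_i})\,\mathbf{1}_{\{r_i\phi(\widetilde Y_{\zeta_i})\le e^{\varepsilon\zeta_i}\}}$ is a.s.\ finite for some $\varepsilon<\lambda_1$ (easy, but a separate estimate), and then combine with $1/M_t(\phi)$ being a $Q^x$-supermartingale to upgrade $\liminf_t M_t(\phi)<\infty$ to $\lim_t M_t(\phi)<\infty$.
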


It follows from the branching property that when $\mu\in {\bf
M}_p(E)$ is given by $\mu=\sum_{i=1}^n\delta_{x_i}, n=1, 2, \dots,
\{x_i; i=1,\cdots, n\}\subset E$, we have
$$
M_t(\phi)=\sum_{i=1}^n e^{-\lambda_1 t}\frac{\langle \phi^t,
X_t^i\rangle}{\phi(x_i)} \cdot\frac{\phi(x_i)}{\langle\phi,
\mu\rangle},
$$
where $X^i_t$ is a branching Hunt process starting from
$\delta_{x_i}, i=1, \dots, n$. If the conclusions hold for the cases
that $\mu=\delta_x, $ for any $x\in E$, then the conclusions also
hold for the general cases.  So in the remainder of this paper, we assume that the
initial measure is of the form $\mu=\delta_x, x\in E,$ and
$P_{\delta_x}$ will be denoted as $P^x$.

This paper is organized as follows. In the next section we will
discuss the spine decomposition of branching Markov processes. The
main result, Theorem \ref{maintheorem}, is proved in the last
section.

\section{Spine decomposition}

The materials of this section are mainly taken from \cite{HH1}. We
also refer to \cite{K2} for some materials. The main reason we
present the details here is to clarify some of the points in
\cite{HH1}.

Let ${\mathbb N}=\{1, 2, \dots\}$. We will use
$$
\Gamma:= \bigcup_{n=0}^{\infty}\mathbb{N}^n
$$
(where $\mathbb{N}^0=\{\emptyset\}$) to describe the genealogical
structure of our branching Hunt process. The length (or
generation) $|u|$ of each $u\in\mathbb{N}^n$ is defined to be $n$.
When $n\ge 1$ and $u=(u_1, \dots, u_n)$, we denote $(u_1, \dots,
u_{n-1})$ by $u-1$ and call it the parent of $u$. For each $i\in
\mathbb{N}$ and $u=(u_1, \dots, u_n)$, we write $ui=(u_1, \dots,
u_n, i)$ for the $i$-th child of $u$.
More generally, for $u=(u_1, \dots, u_n), v=(v_1, \dots, v_m)\in \Gamma$,
we will use $uv$ to stand for the
concatenation $(u_1, \dots, u_n, v_1, \dots, v_m)$ of $u$ and $v$.
We will use the notation
$v<u$ to mean that $v$ is an ancestor of $u$. The set of all
ancestors of $u$ is given by $\{v\in \Gamma: \ v<u\}=\{v\in
\Gamma: \exists\ w\in \Gamma\setminus\{\emptyset\}\mbox{ such that
} vw=u\}.$
The notation $v\le u$ has the obvious meaning that either $v<u$ or $v=u$.

A subset $\tau\subset \Gamma$ is called a Galton-Watson tree if a)
$\emptyset\in\tau$; b) if $u,\ v\in\Gamma,$ then $uv\in\tau$ implies
$u\in\tau$; c) for all $u\in\tau,$ there exists $r_u\in\mathbb{N}$
such that when $j\in\mathbb{N},\ uj\in\tau$ if and only if $1\leq
j\leq r_u$. We will denote the collection of Galton-Watson trees by
$\mathbb{T}$. Each $u\in\tau$ is called a node of $\tau$ or an
individual in $\tau$ or just a particle.

To fully describe the branching Hunt process $X$, we need to
introduce the concept of marked Galton-Watson trees. We suppose that
each individual $u\in\tau$ has a mark $(Y_u,\ \sigma_u,\ r_u)$
where:
\begin{itemize}
\item[(i)] $\sigma_u$ is the lifetime of $u$, which determines the
fission time or the death time of particle $u$ as
$\zeta_u=\sum_{v\leq u}\sigma_{v}\ (\zeta_\emptyset =
\sigma_\emptyset)$, and the birth time of $u$ as
$b_u=\sum_{v<u}\sigma_{v}$ $(b_\emptyset =0)$;

\item[(ii)]
$Y_u: [b_u,\ \zeta_u)\rightarrow E_\Delta$ gives the location of
$u$. Given $Y_{u-1}(\zeta_{u-1}-)$ and $b_u$, $(Y_u, u\in [b_u,\
\zeta_u))$ is the restriction to $[b_u,\ \zeta_u)$ of a copy of a
Hunt process starting from $Y_{u-1}(\zeta_{u-1}-)$ at time $b_u$,
i.e., a process with law $\Pi_{Y_{u-1}(\zeta_{u-1}-)}$ shifted by
$b_u$.

\item[(iii)]$r_u $ gives the number
of the offspring born by $u$ when it dies.  It is distributed as
$P(Y_u(\zeta_u-))=(p_k(Y_u(\zeta_u-)))_{k\in\mathbb{N}}$ which is as
defined in Section 1.
\end{itemize}

We will use $(\tau,\ Y, \sigma,\ r )$ (or simply $(\tau, M)$) to
denote a marked Galton-Watson tree. We denote the set of all marked
Galton-Watson trees by $\mathcal{T}=\{(\tau,M):
\tau\in\mathbb{T}\}.$

For any $\tau\in\mathbb{T},$ we can select a line of decent
$\xi=\{\xi_0=\emptyset,\ \xi_1, \xi_2,\cdots\},$ where
$\xi_{n+1}\in\tau$ is an offspring of $\xi_n\in\tau,\ n=0,1, \cdots$.
Such a genealogical line is called a spine.  We will write $(M,
\xi)$ for a marked spine.  We will write $u\in \xi$ to mean that
$u=\xi_i$ for some $i\geq 0$.  We will use
$$
\widetilde{\mathcal{T}}=\{(\tau, Y, \sigma, r, \xi): \
\xi\subset\tau\in\mathbb{T}\}
$$
denote the set of marked trees with distinguished spines.
$L_t=\{u\in \tau: b_u\leq t<\zeta_u\}$ is the set of particles that
are alive at time $t$.

We will use $\widetilde{Y}=(\widetilde{Y}_t)_{t\geq 0}$ to denote
the spatial path followed by a spine and $n=(n_t:  \ t\geq 0)$ to
denote the counting process of fission times along the spine. More
precisely, $\widetilde Y_t=Y_u(t)$ and $n_t=|u|$, if $u\in L_t\cap
\xi.$ We use $\mbox{node}_t((\tau, M,\xi))$, or simply
$\mbox{node}_t(\xi)$, to denote the node in the spine that is alive
at time $t$:
$$
\mbox{node}_t(\xi):=\mbox{node}_t((\tau, M,\xi)):= u \quad \mbox{ if
} u\in\xi\cap L_t.
$$ It is clear that $\mbox{node}_t(\xi)=\xi_{n_t}$.

If $v\in\xi$, then at the fission time $\zeta_v$, it gives birth to
$r_v$ offspring, one of which continues the spine whilst the others
go off to create sub-trees which are copies of the original
branching Hunt process. Let $O_v$ be the set of offspring of $v$
except the one belonging to the spine, then for any $j=1, \dots,
r_v$ such that $vj\in O_v$, we will use $(\tau,\ M)^v_j$ to denote
the marked tree rooted at $vj$.

Now we introduce five filtrations on $\widetilde\cT$ that we shall
use. Define
$$
\begin{array}{rl}\cF_t:=&
\sigma\left\{\left[u, r_u,\sigma_u, (Y_u(s), s\in [b_u, \zeta_u)):
u\in\tau\in\mathbb{T}\mbox{ with }\zeta_u\le t\right]\mbox{ and
}\right.\\&\left. \left[u, (Y_u(s), s\in[b_u, t]):
u\in\tau\in\mathbb{T}\mbox{ with }t\in[b_u, \zeta_u)\right]\right\};\\
\widetilde\cF_t:=&\sigma(\cF_t, (\mbox{node}_s(\xi), s\le t));\\
\cG_t:=&\sigma(\widetilde Y_s: 0\le s\le t);\\
\widehat\cG_t:=&\sigma(\cG_t,  (\mbox{node}_s(\xi):s\le
t),(\zeta_u, u<\mbox{node}_t(\xi)));\\
\widetilde\cG_t:=&\sigma(\cG_t, (\mbox{node}_s(\xi):s\le t),
(\zeta_u, u<\mbox{node}_t(\xi)),(r_u: u<\mbox{node}_t(\xi))).
\end{array}$$

The filtrations $\cF_t$, $\widetilde\cF_t$, $\cG_t$, and $\widetilde
\cG_t$ were introduced in \cite{HH1},  while the filtration
$\widehat \cG_t$ is newly defined. It is obvious that
$\cG_t\subset\widehat\cG_t\subset \widetilde \cG_t\subset
\widetilde\cF_t$. Set $\cF=\bigcup_{t\geq 0}\cF_t,$
$\widetilde\cF=\bigcup_{t\geq 0}\widetilde\cF_t,$
$\cG=\bigcup_{t\geq 0}\cG_t$, $\widehat\cG=\bigcup_{t\geq
0}\widehat\cG_t$ and $\widetilde\cG=\bigcup_{t\geq
0}\widetilde\cG_t$.

\bigskip

For each $x\in E$, let $P^x$ be the measure on $(\widetilde
\cT,\cF)$ such that the filtered probability space $(\widetilde
\cT, \cF, (\cF_t)_{t\ge 0}, (P^x)_{x\in E})$ is the canonical
model  for $X$, the branching Hunt process in $E$. For detailed
constructions of $P^x$, we refer our readers to \cite{C1},
\cite{C2} and  \cite{N}. As noted by Hardy and Harris \cite{HH1},
it is convenient to consider $P^x$ as a measure on the enlarged
space $\widetilde \cT$, rather than on $\cT$. We shall use $P^x_t$
for the restriction of $P^x$ to ${\cF_t}$.

We need to extend the probability measures $P^x$ to probability
measures $\widetilde P^x$ on $(\widetilde\cT,
\widetilde\cF)$ so that the spine is a single genealogical
line of descent chosen from the underlying tree. We will assume
that at each fission time we make a uniform choice amongst the
offspring to decide which line of descent continues the spine
$\xi$. Then for $u\in \tau$ we have
$$
\mbox{Prob}(u\in\xi)=\prod_{v<u}\frac{1}{r_v}.
$$
It is easy to see that
$$
\sum_{u\in L_t}\prod_{v<u}\frac{1}{r_v}=1.
$$

To define $\widetilde P^x$ we recall the following representation
from \cite{L}.

\begin{thm}\label{decom-f}
Every $\widetilde{\mathcal{F}}_t$-measurable function $f$  can be
written as
 \beq\label{decom}f=\sum_{u\in L_t}f_u(\tau,
M)I_{\{u\in\xi\}},
 \eeq
where $f_u$ is $\mathcal{F}_t$-measurable.
\end{thm}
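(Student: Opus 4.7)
The plan is to use a monotone class argument built on the observation that the events $\{u\in\xi\}$ for $u\in L_t$ form a partition of $\widetilde{\mathcal{T}}$, since at each time $t$ exactly one of the particles alive belongs to the spine. Thus the trivial identity
\[
f = \sum_{u\in L_t} f\cdot I_{\{u\in\xi\}}
\]
always holds, and the real content of the theorem is that the coefficient of each $I_{\{u\in\xi\}}$ can be chosen $\mathcal{F}_t$-measurable. A second observation I would exploit is that the whole spine up to time $t$ is recovered from $\mbox{node}_t(\xi)\in L_t$ together with the tree structure in $\mathcal{F}_t$ (the spine lineage being $\{v:v\le \mbox{node}_t(\xi)\}$), so $\widetilde{\mathcal{F}}_t = \sigma(\mathcal{F}_t,\mbox{node}_t(\xi))$, and a convenient $\pi$-system generating $\widetilde{\mathcal{F}}_t$ is the family $\{A\cap\{u_0\in\xi\}:A\in\mathcal{F}_t,\ u_0\in\Gamma\}$.

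I would first verify the required representation on this $\pi$-system. For $u\in L_t$ the event $\{u_0\in\xi\}\cap\{u\in\xi\}$ equals $\{u\in\xi\}$ when $u_0\le u$ and is empty otherwise, so
\[
I_{A\cap\{u_0\in\xi\}} = \sum_{u\in L_t} I_A\cdot I_{\{u_0\le u\}}\cdot I_{\{u\in\xi\}},
\]
and the coefficient $I_A I_{\{u_0\le u\}}$ is plainly $\mathcal{F}_t$-measurable since $L_t$ and the ancestry relations among its members are encoded in $\mathcal{F}_t$. Let $\mathcal{H}$ denote the class of bounded $\widetilde{\mathcal{F}}_t$-measurable functions admitting a representation of the form \rf{decom}. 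Then $\mathcal{H}$ is a vector space that contains the constants and all indicators of sets in the $\pi$-system just exhibited.

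Finally I would verify that $\mathcal{H}$ is closed under uniformly bounded monotone limits, which supplies the last ingredient for the monotone class theorem: if $f^{(n)}\uparrow f$ with $f^{(n)} = \sum_{u\in L_t} f^{(n)}_u\, I_{\{u\in\xi\}}$, then setting $f_u:=\limsup_n f^{(n)}_u$ yields an $\mathcal{F}_t$-measurable function coinciding with $f$ on $\{u\in\xi\}$, whence $f = \sum_{u\in L_t} f_u I_{\{u\in\xi\}}$. Extension from bounded to arbitrary nonnegative, and then signed, $\widetilde{\mathcal{F}}_t$-measurable $f$ is routine via truncation and splitting into positive and negative parts. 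The step I would pause most carefully over is the generating-family claim, i.e.\ that the $\sigma$-algebra generated by $\mathcal{F}_t$ and the events $\{u_0\in\xi\}$, $u_0\in\Gamma$, is exactly $\widetilde{\mathcal{F}}_t$; this reduces to checking that knowledge of $\mbox{node}_t(\xi)$ on top of $\mathcal{F}_t$ recovers every earlier spine node through its $\mathcal{F}_t$-measurable lineage, and hence the whole process $(\mbox{node}_s(\xi))_{s\le t}$.
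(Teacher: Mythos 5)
The paper does not actually prove this theorem: it is cited without proof from Lyons (reference [L]), so there is no in-paper argument to compare your proposal against. Judged on its own merits, your monotone class approach is the natural one and is essentially correct: the key identification $\widetilde{\cF}_t=\sigma(\cF_t,\mbox{node}_t(\xi))$, the partition of $\widetilde\cT$ by the events $\{u\in\xi\}$, $u\in L_t$, and the closure of the class $\cH$ under bounded monotone limits via $f_u:=\limsup_n f_u^{(n)}$ are all sound.

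One point needs tightening. As written, your generating $\pi$-system $\{A\cap\{u_0\in\xi\}: A\in\cF_t,\ u_0\in\Gamma\}$ lets $u_0$ range over all of $\Gamma$, but if $b_{u_0}>t$ (the node $u_0$ is born after time $t$) then $\{u_0\in\xi\}$ is \emph{not} $\widetilde\cF_t$-measurable, and your identity $\{u_0\in\xi\}\cap\{u\in\xi\}=\emptyset$ for $u\in L_t$ with $u_0\not\le u$ also fails (when $u<u_0$ the intersection is $\{u_0\in\xi\}$, which need not be empty). The fix is to restrict $u_0$ to nodes with $b_{u_0}\le t$, equivalently to $u_0\in\{v: v\le u\text{ for some }u\in L_t\}$, or most cleanly to use the $\pi$-system $\{A\cap\{\mbox{node}_t(\xi)=u_0\}:A\in\cF_t,\ u_0\in\Gamma\}$. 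With this restriction the $\pi$-system sits inside $\widetilde\cF_t$, generates it (by the lineage-recovery observation you make at the end), and the displayed identity $I_{A\cap\{u_0\in\xi\}}=\sum_{u\in L_t}I_A I_{\{u_0\le u\}}I_{\{u\in\xi\}}$ holds exactly. With that correction the argument is complete.
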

We define the measure  $\widetilde P^x$ on $\widetilde\cF_t$ by
\begin{eqnarray}\label{spine representation}
\mbox{d}\widetilde P^x(\tau, M, \xi)\Big|_{\widetilde
\cF_t}=\mbox{d}\Pi_x(\widetilde Y) \mbox{d}L^{\beta(\widetilde
Y)}({\bf n})\prod_{v<\xi_{n_t}}p_{r_v}(\widetilde Y_{\zeta_v})
\prod_{v<\xi_{n_t}}\frac{1}{r_v} \prod_{j:\ vj\in
O_v}\mbox{d}P^{\widetilde Y_{\zeta_v}}_{t-\zeta_v}((\tau, M)^v_j),
\end{eqnarray}
where $L^{\beta(\widetilde Y)}({\bf n})$ is the law of the Poisson
random measure ${\bf n}=\{\{\sigma_i: i=1, \cdots, n_t\}: t\ge 0\} $
with intensity $\beta(\widetilde Y_t)dt$ along the path of
$\widetilde Y$, $\Pi_x(\widetilde Y)$ is the law of the diffusion
$\widetilde Y$ staring from $x\in E$, and $p_{r_v}(y)=\sum_{k\ge
2}p_k(y)I_{(r_v=k)}$ is the probability that individual $v$, located
at $y\in E$, has $r_v$ offspring.

It follows from Theorem \ref{decom-f} that for any
bounded $f\in \widetilde\cF_t$,
$$\begin{array}{rl}
\widetilde P^x(f|\cF_t)
=&\displaystyle\widetilde P^x\left(\left.\sum_{u\in L_t}f_u(\tau,
M)I_{\{u\in\xi\}}\right|\cF_t\right)\\
=&\displaystyle\sum_{u\in L_t}f_u(\tau, M)\widetilde
P^x\left(\left.I_{\{u\in\xi\}}\right|\cF_t\right)\\
=&\displaystyle\sum_{u\in L_t}f_u(\tau, M)\prod_{v<u}\frac{1}{r_v}.
\end{array}$$
Then we have
 \beq\label{decom-P} \widetilde
P^x(f)=P^x\left(\sum_{u\in L_t}f_u(\tau,
M)\prod_{v<u}\frac{1}{r_v}\right),\quad \mbox{ for any bounded
}f\in\widetilde \cF_t, t\ge 0.
 \eeq
In particular,
$$
\widetilde P^x(\widetilde \cT)=P^x\left(\sum_{u\in
L_t}\prod_{v<u}\frac{1}{r_v}\right)= P^x(1)=1,
$$
which implies $\widetilde P^x$ is a probability measure. $\widetilde
P^x$ is an extension of $P^x$ onto $(\widetilde\cT,\widetilde\cF)$
and for any bounded $f\in\widetilde\cF_t$ we have
\begin{equation}\label{many-to-one}
\int_{\widetilde\cT}f\ \mbox{d}\widetilde
P^x=\int_{\widetilde\cT}\sum_{u\in
L_t}f_u\prod_{v<u}\frac{1}{r_v}\ \mbox{d}P^x.
\end{equation}

The decomposition \eqref{spine representation} of $\widetilde P^x$
suggests the following intuitive construction of the system under
$\widetilde P^x$:
\begin{itemize}
\item[(i)] the root of $\tau$ is at $x$, and the spine process
$\widetilde Y_t$ moves according to the measure $\Pi_x$;

\item[(ii)] given the trajectory $\widetilde{Y_\cdot}$ of the spine,
the fission time $\zeta_v$ of node $v$ on the spine is distributed
according to $L^{\beta(\widetilde{Y})},$ where
$L^{\beta(\widetilde{Y})}$ is the law of the Poisson random measure
with intensity $\beta(\widetilde Y_t)dt$;

\item[(iii)] at the fission time of node $v$ on the spine, the single
spine particle is replaced by  a random number $r_v$ of offspring
with $r_v$ being distributed according to the law $P(\widetilde
Y_{\zeta_v})=(p_k(\widetilde Y_{\zeta_v}))_{k\ge 1}$;

\item[(vi)] the spine is chosen uniformly from the $r_v$ offspring of $v$ at
the fission time of $v$;

\item[(v)] each of the remaining $r_v-1$ particles $vj\in O_v$ gives rise to the
independent subtrees $(\tau, M)^{v}_j$, which evolve as independent
subtrees determined by the probability measure $P^{\widetilde
Y_{\zeta_v}}$ shifted to the time of creation.
\end{itemize}

\begin{definition}\label{con-mart}
Suppose that $(\Omega, \cH, P)$ is a probability space,
$\{\cH_t,t\ge 0\}$ is a filtration on $(\Omega, \cH)$ and that
${\cal K}$ is a sub-$\sigma$-field of $\cH$. A real-valued process
$U_t$ on $(\Omega, \cF, P)$ is called a $P(\cdot |\ {\cal
K})$-martingale with respect
to $\{\cH_t,t\ge 0\}$ if (i) it is adapted to $\{\cH_t\vee{\cal
K},t\ge 0\}$; (ii) for any $t\geq 0,\ E(|U_t|)<\infty$ and (iii) for
any $t>s$,
$$
E(U_t\big|\cH_s\vee{\cal K})=
U_s,\quad{\rm a.s.}
$$
We also say that $U_t$ is a martingale
with respect to $\{\cH_t,t\ge 0\}$, given
${\cal K}$.
\end{definition}

\begin{lemma}\label{mart-prod}
Suppose that $(\Omega, \cH, P)$ is a probability space,
$\{\cH_t,t\ge 0\}$ is a filtration on $(\Omega, \cH)$ and that
${\cal K}_1, {\cal K}_2$ are two sub-$\sigma$-fields of ${\cal H}$
such that ${\cal K}_1\subset{\cal K}_2$. Assume that $U^1_t$ is a
$P(\cdot |\ {\cal K}_1)$-martingale with respect to $\{{\cal
H}_t,t\ge 0\}$, $U^2_t$ is a $P(\cdot |{\cal K}_2)$-martingale with
respect to $\{{\cal H}_t,t\ge 0\}$. If $U^1_t\in{\cal K}_2$,
$U^2_t\in\cH_t$, and $E\left(|U_t^1U_t^2|\right)<\infty$ for any
$t\ge 0$, then the product $U^1_t U^2_t$ is a $P(\cdot |\ {\cal
K}_1)$-martingale with respect to $\{\cH_t,t\ge 0\}$.
\end{lemma}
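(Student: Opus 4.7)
The plan is to verify the three defining properties in Definition \ref{con-mart} for the product $U^1_tU^2_t$ with respect to the conditioning $\sigma$-field ${\cal K}_1$. Adaptedness and integrability are immediate: since $U^1_t$ is $\cH_t\vee{\cal K}_1$-measurable (as part of being a $P(\cdot|{\cal K}_1)$-martingale) and $U^2_t\in\cH_t\subset\cH_t\vee{\cal K}_1$, the product lies in $\cH_t\vee{\cal K}_1$; integrability is the assumed hypothesis. So the whole content is to check the martingale identity
$$
E\bigl(U^1_tU^2_t\,\big|\,\cH_s\vee{\cal K}_1\bigr)=U^1_sU^2_s,\qquad t>s.
$$

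I would prove this by a tower-property computation, using crucially that ${\cal K}_1\subset{\cal K}_2$ implies $\cH_s\vee{\cal K}_1\subset\cH_s\vee{\cal K}_2$. First I would condition on the larger $\sigma$-field and pull $U^1_t$ out, which is legal because $U^1_t\in{\cal K}_2\subset\cH_s\vee{\cal K}_2$:
$$
E\bigl(U^1_tU^2_t\,\big|\,\cH_s\vee{\cal K}_2\bigr)=U^1_t\,E\bigl(U^2_t\,\big|\,\cH_s\vee{\cal K}_2\bigr)=U^1_tU^2_s,
$$
where the second equality uses that $U^2$ is a $P(\cdot|{\cal K}_2)$-martingale with respect to $\{\cH_t\}$. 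Then applying $E(\cdot|\cH_s\vee{\cal K}_1)$ and taking out the $\cH_s\vee{\cal K}_1$-measurable factor $U^2_s$ gives
$$
E\bigl(U^1_tU^2_t\,\big|\,\cH_s\vee{\cal K}_1\bigr)=U^2_s\,E\bigl(U^1_t\,\big|\,\cH_s\vee{\cal K}_1\bigr)=U^2_sU^1_s,
$$
using the $P(\cdot|{\cal K}_1)$-martingale property of $U^1$ in the last step.

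There is essentially no deep obstacle here; the only points requiring minor care are (a) justifying the two ``take out what is known'' steps, which need the integrability of $U^1_tU^2_s$ and $U^2_sU^1_t$ respectively, and (b) confirming the nesting $\cH_s\vee{\cal K}_1\subset\cH_s\vee{\cal K}_2$ so that the outer tower step is valid. Integrability of $U^1_tU^2_s$ follows because $U^2_s=E(U^2_t|\cH_s\vee{\cal K}_2)$ and the conditional Jensen inequality together with the assumed integrability of $U^1_tU^2_t$ give $E|U^1_tU^2_s|\le E|U^1_tU^2_t|<\infty$; integrability of $U^2_sU^1_t$ is handled similarly using $U^1_s=E(U^1_t|\cH_s\vee{\cal K}_1)$ at the end of the computation, or is subsumed by the previous display. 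Once these routine points are in place the identity above completes the verification.
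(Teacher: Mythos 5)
Your proof is correct and follows exactly the same chain as the paper's: tower down to $\cH_s\vee{\cal K}_2$, pull out $U^1_t$, apply the $P(\cdot|{\cal K}_2)$-martingale property of $U^2$, then pull out $U^2_s$ and apply the $P(\cdot|{\cal K}_1)$-martingale property of $U^1$. Your extra remarks on the integrability needed for the ``take out what is known'' steps are a sensible addition that the paper leaves implicit.
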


\begin{proof} Suppose that $t\ge s\ge 0$.
The assumption that $U^1_t\in{\cal K}_2$ implies that $U^1_t\in
\cH_s\vee{\cal K}_2$. Then
$$
\begin{array}{rl}P(U^1_tU^2_t| \cH_s\vee{\cal K}_1)=&
P\left[P(U^1_tU^2_t|\cH_s\vee{\cal K}_2)|\cH_s\vee {\cal
K}_1\right]\\
=&P\left[U^1_tP(U^2_t|\cH_s\vee{\cal K}_2)|\cH_s\vee
{\cal K}_1\right]\\
=&P\left[U^1_tU^2_s|\cH_s\vee {\cal K}_1\right]\\
=&U^2_sP\left[U^1_t|\cH_s\vee {\cal K}_1\right]\\
=& U^1_sU^2_s,\end{array}
$$
where in the last second equality we used the assumption that
$U^2_s\in{\cal H}_s$.\qed
\end{proof}

\begin{lemma}\label{HH1-results}
Suppose that ${\bf n}=\{\{\zeta_i: i=1, \cdots, n_t\}: t\ge 0\} $ is
a Poisson random  measure with intensity $\beta(\widetilde Y_t)dt$
along the path of $\widetilde Y$.  Then
$$
\eta_t^{(1)}:=\prod_{i\le n_t}A(\widetilde
Y_{\zeta_i})\cdot\exp\left(-\int^t_0((A-1)\beta)(\widetilde
Y_s)ds\right)
$$
is an $L^{\beta(\widetilde Y)}$-martingale with respect to the
natural filtration $\{\cL_t\}$ of  ${\bf n}$.
\end{lemma}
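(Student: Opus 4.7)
My plan is to treat the spine trajectory $\widetilde Y$ as frozen throughout the argument (since $L^{\beta(\widetilde Y)}$ is the conditional law of $\mathbf{n}$ given $\widetilde Y$) and to verify the three defining properties of a martingale for $\eta_t^{(1)}$ under this Poisson law. Adaptedness to $\cL_t$ is immediate from the definition. Integrability follows from the crude bound $|\eta_t^{(1)}|\le \|A\|_\infty^{n_t}\exp(t\|(A-1)\beta\|_\infty)$ together with the fact that $n_t$ has finite exponential moments under a Poisson law of total intensity at most $t\|\beta\|_\infty$.

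For the martingale identity at $0\le s<t$, I would factor $\eta_t^{(1)}=\eta_s^{(1)}\,R_{s,t}$, where
\[
R_{s,t}:=\prod_{s<\zeta_i\le t}A(\widetilde Y_{\zeta_i})\cdot\exp\left(-\int_s^t ((A-1)\beta)(\widetilde Y_u)\,du\right).
\]
Since $\eta_s^{(1)}$ is $\cL_s$-measurable and $R_{s,t}$ depends only on the restriction of $\mathbf{n}$ to $(s,t]$, which is independent of $\cL_s$ by the independent-increments property of the inhomogeneous Poisson point process, it suffices to verify that $L^{\beta(\widetilde Y)}[R_{s,t}]=1$.

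To evaluate this expectation I would condition on $n_t-n_s=k$ and invoke the classical fact that, given this count, the fission times in $(s,t]$ are distributed as the order statistics of $k$ i.i.d.\ samples with density $\beta(\widetilde Y_u)/\Lambda_{s,t}$ on $(s,t]$, where $\Lambda_{s,t}:=\int_s^t\beta(\widetilde Y_u)\,du$. Summing the resulting Poisson series gives
\[
L^{\beta(\widetilde Y)}[R_{s,t}]=e^{-\int_s^t((A-1)\beta)(\widetilde Y_u)\,du}\sum_{k=0}^\infty\frac{e^{-\Lambda_{s,t}}}{k!}\left(\int_s^t A(\widetilde Y_u)\beta(\widetilde Y_u)\,du\right)^k=1,
\]
the final equality because $e^{-\Lambda_{s,t}}\exp(\int_s^t A\beta\,du)=\exp(\int_s^t(A-1)\beta\,du)$.

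There is no substantive obstacle: this lemma is essentially Dol\'eans--Dade's exponential martingale for an inhomogeneous Poisson process. The only subtle point is to be precise about what ``martingale under $L^{\beta(\widetilde Y)}$'' means, namely, that the spine path $\widetilde Y$ is held fixed so that all randomness lives in the point measure $\mathbf{n}$; once this is understood, the computation above is a direct verification.
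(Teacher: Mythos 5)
Your proof is correct and follows essentially the same route as the paper: the paper states the Poisson exponential identity $L^{\beta(\widetilde Y)}[\prod_{i\le n_t}A(\widetilde Y_{\zeta_i})]=\exp(\int_0^t((A-1)\beta)(\widetilde Y_s)\,ds)$ and then remarks that the martingale property follows from the Markov property of $\mathbf{n}$, omitting the details, which are exactly what you supply via the factorization $\eta_t^{(1)}=\eta_s^{(1)}R_{s,t}$, independence of increments, and the order-statistics computation of $L^{\beta(\widetilde Y)}[R_{s,t}]=1$.
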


\begin{proof}  First note  that
 \beq\label{Possion}
L^{\beta(\widetilde Y)}\left[\prod_{i\le n_t}A(\widetilde
Y_{\zeta_i})\right]=\exp\left(\int^t_0((A-1)\beta)(\widetilde
Y_s)ds\right),
 \eeq
which implies that  $L^{\beta(\widetilde Y)}(\eta^{(1)}_t)=1$. It is
easy to check that $\eta_t^{(1)}$ is a martingale under
$L^{\beta(\widetilde Y)}$ by using the Markov property of ${\bf n}$.
We omit the details. \qed
\end{proof}

It follows from the lemma above that we can define a measure
$L^{(A\beta)(\widetilde Y)}$ by
$$
\frac{dL^{(A\beta)(\widetilde Y)}}{dL^{\beta(\widetilde Y)}}
\Bigg|_{\cL_t} = \prod_{i \le {n_t}}A(\widetilde
Y_{\zeta_i})\cdot\exp\left(-\int^t_0((A-1)\beta)(\widetilde
Y_s)ds\right).
$$

\begin{lemma}
For any $x\in E$ and $t\ge 0$, we have
\beq\label{cond-mean1}\displaystyle\widetilde
P^x\left[\left.\prod_{v<\xi_{n_t}}\frac{r_v}{A(\widetilde
Y_{\zeta_v})}\right|\widehat\cG\right]=1.\eeq
\end{lemma}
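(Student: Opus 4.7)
The plan is to show that under $\widetilde P^x$, conditional on $\widehat\cG$, the offspring counts $\{r_v:\, v<\xi_{n_t}\}$ at spine fission sites are mutually independent, each distributed as the original offspring law $(p_k(\widetilde Y_{\zeta_v}))_{k\ge 2}$. Granted this, the claim drops out by conditional independence and the definition $A(y)=\sum_{k\ge 2} k\,p_k(y)$.

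First I would read off the relevant conditional structure from the disintegration formula \eqref{spine representation}. The filtration $\widehat\cG$ records the spine's spatial trajectory $\widetilde Y$, the spine fission times $(\zeta_v)_{v<\xi_{n_t}}$, and the number $n_t$ of spine fissions in $[0,t]$, but not the offspring counts $r_v$ themselves (these are added only in the enlarged filtration $\widetilde\cG$). Given the data in $\widehat\cG$, the random ingredients in \eqref{spine representation} still to be integrated out are the spine-child choices at each fission (sitting inside the factor $\prod_v(1/r_v)$), the off-spine marked subtrees $(\tau,M)^v_j$ whose laws $P^{\widetilde Y_{\zeta_v}}_{t-\zeta_v}$ are probability measures and hence each integrate to $1$, and the offspring counts $r_v$ themselves, weighted by $\prod_v p_{r_v}(\widetilde Y_{\zeta_v})$. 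Carrying out this marginalisation leaves the product $\prod_v p_{r_v}(\widetilde Y_{\zeta_v})$, which is the advertised conditional law. Equivalently, this is exactly what item~(iii) of the intuitive construction following \eqref{spine representation} asserts: at each spine fission, $r_v$ is drawn independently from the original offspring law $(p_k(\widetilde Y_{\zeta_v}))_{k\ge 2}$.

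With this conditional law in hand, the conclusion follows immediately from conditional independence and $A(\widetilde Y_{\zeta_v})=\sum_{k\ge 2} k\,p_k(\widetilde Y_{\zeta_v})$:
\[
\widetilde P^x\!\left[\prod_{v<\xi_{n_t}} \frac{r_v}{A(\widetilde Y_{\zeta_v})}\,\bigg|\,\widehat\cG\right]
= \prod_{v<\xi_{n_t}} \frac{1}{A(\widetilde Y_{\zeta_v})}\,\widetilde P^x\!\left[r_v\,\big|\,\widehat\cG\right]
= \prod_{v<\xi_{n_t}} \frac{A(\widetilde Y_{\zeta_v})}{A(\widetilde Y_{\zeta_v})}
= 1.
\]

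The main technical hurdle is the bookkeeping around the factor $\prod_v(1/r_v)$ in \eqref{spine representation}: this factor encodes the uniform selection of the spine child, and one must marginalise the spine-label choices together with the off-spine subtree contributions carefully, lest one accidentally produce the size-biased law $k\,p_k/A$ for $r_v$, which would break the identity. Using the intuitive description (i)--(v) following \eqref{spine representation} as a guide, together with the many-to-one relation \eqref{decom-P}, provides a clean route to organising this step.
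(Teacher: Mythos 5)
Your argument is correct and essentially reproduces the paper's proof: both read off from the spine representation that, conditionally on $\widehat\cG$, the offspring numbers $\{r_v : v<\xi_{n_t}\}$ at spine fissions are independent with conditional mean $A(\widetilde Y_{\zeta_v})$, whence the product identity is immediate (you make the slightly sharper intermediate observation that the conditional law of each $r_v$ is the original, unbiased offspring law $(p_k(\widetilde Y_{\zeta_v}))_k$, which of course implies the mean statement). One small slip in your closing caution: mishandling the factor $\prod_v (1/r_v)$ would tend to tilt $r_v$ toward a $1/k$-biased law, not the size-biased law $k\,p_k/A$ --- the latter appears only after the change of measure to $\widetilde Q^x$ via $\widetilde\eta^{(2)}_t$ --- but this remark does not affect the correctness of your argument.
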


\begin{proof}
It follows from \eqref{spine representation} that, given
$\widehat\cG$, for each $v<\xi_{n_t}$,
$$
\widetilde P^x(r(\widetilde
Y_{\zeta_v})|\widehat\cG)=A(\widetilde Y_{\zeta_v}).
$$
Since, given $\widehat\cG$,  $\{r_v, v<\xi_{n_t}\}$ are independent,
we have
$$
\widetilde P^x \left(\left.\prod_{v<\xi_{n_t}}\frac{r(\widetilde
Y_{\zeta_i})}{A(\widetilde Y_{\zeta_i})}\right|\widehat\cG\right)=1.
$$
\qed
\end{proof}

The following lemma corresponds to Theorems 5.4 and 5.5 in
\cite{HH1} which were not proved there. Our results are somewhat
different from those stated in Theorems 5.4 and 5.5 in \cite{HH1}.

\begin{lemma}\label{HH1-results}
(1) The process
$$
\widetilde\eta_t^{(1)}:=\prod_{v<\xi_{n_t}}A(\widetilde
Y_{\zeta_v})\cdot\exp\left(-\int^t_0((A-1)\beta)(\widetilde
Y_s)ds\right)
$$
is a $\widetilde P^x(\cdot |\ {\cG})$-martingale with respect to
$\{\widetilde\cF_t,t\ge 0\}$.

(2) The process
$$
\widetilde\eta_t^{(2)}:=\prod_{v<\xi_{n_t}}\frac{r_v}{A(\widetilde
Y_{\zeta_v})}
$$
is  a $\widetilde P^x(\cdot |\widehat\cG)$-martingale with respect
to $\{\widetilde\cF_t,t\ge 0\}$.
\end{lemma}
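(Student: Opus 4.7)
The plan is to treat the two parts in parallel, reducing each to an application of the preceding lemmas (the Poisson martingale identity \eqref{Possion} for part (1), and the conditional mean computation underlying \eqref{cond-mean1} for part (2)) by exploiting the product structure of $\widetilde P^x$ displayed in \eqref{spine representation}.

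For part (1) I would first check adaptedness and integrability: $\widetilde\eta_t^{(1)}$ depends only on $(\widetilde Y_s)_{s\le t}$ and on the spine fission times up to $t$, so it is $\widetilde{\mathcal{F}}_t\vee\mathcal{G}$-measurable, and integrability follows from \eqref{Possion} together with the tower property. For $s<t$ I would factor
$$
\widetilde\eta_t^{(1)} \;=\; \widetilde\eta_s^{(1)}\,R_{s,t},\qquad R_{s,t}\;:=\!\!\prod_{\xi_{n_s}\le v<\xi_{n_t}}\!\! A(\widetilde Y_{\zeta_v})\cdot\exp\Bigl(-\int_s^t ((A-1)\beta)(\widetilde Y_u)\,du\Bigr),
$$
note that $\widetilde\eta_s^{(1)}$ is $\widetilde{\mathcal{F}}_s\vee\mathcal{G}$-measurable, and reduce the claim to $\widetilde P^x[R_{s,t}\mid\widetilde{\mathcal{F}}_s\vee\mathcal{G}]=1$. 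The decisive input is that the product form of \eqref{spine representation} makes the Poisson random measure $\mathbf{n}$ of spine fissions, conditional on $\mathcal{G}$, independent of the family $\{r_v\}$ and of the off-spine subtrees; hence augmenting the conditioning by $\widetilde{\mathcal{F}}_s$ only records $\mathbf{n}$ on $[0,s]$, and by independent increments the restriction of $\mathbf{n}$ to $(s,t]$ remains Poisson with intensity $\beta(\widetilde Y_u)\,du$. Applying \eqref{Possion} on $(s,t]$ then yields the desired equality.

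For part (2) the strategy is the same: integrability of $\widetilde\eta_t^{(2)}$ follows from \eqref{cond-mean1} and the tower property, and the factorization
$$
\widetilde\eta_t^{(2)} \;=\; \widetilde\eta_s^{(2)}\,R'_{s,t},\qquad R'_{s,t}\;:=\!\!\prod_{\xi_{n_s}\le v<\xi_{n_t}}\!\!\frac{r_v}{A(\widetilde Y_{\zeta_v})},
$$
reduces the martingale identity to $\widetilde P^x[R'_{s,t}\mid\widetilde{\mathcal{F}}_s\vee\widehat{\mathcal{G}}]=1$. By the same reasoning that produced \eqref{cond-mean1}, given $\widehat{\mathcal{G}}$ the variables $\{r_v:\,v<\xi_{n_t}\}$ are conditionally independent with $\widetilde P^x[r_v\mid\widehat{\mathcal{G}}]=A(\widetilde Y_{\zeta_v})$. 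The additional information carried by $\widetilde{\mathcal{F}}_s$ beyond $\widehat{\mathcal{G}}$ consists of the offspring counts $\{r_v:\,v<\xi_{n_s}\}$ of spine nodes that have already died by time $s$, together with the off-spine subtrees issued before $s$; by the product structure in \eqref{spine representation} these are independent of the post-$s$ offspring counts $\{r_v:\,\xi_{n_s}\le v<\xi_{n_t}\}$ once $\widehat{\mathcal{G}}$ is known. Consequently the conditional law of the factors appearing in $R'_{s,t}$ given $\widetilde{\mathcal{F}}_s\vee\widehat{\mathcal{G}}$ coincides with their law given $\widehat{\mathcal{G}}$, so the product of their conditional means is $1$.

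The main obstacle I anticipate lies in the rigorous justification of these two conditional-independence statements, namely that adjoining $\widetilde{\mathcal{F}}_s$ to $\mathcal{G}$ (resp.\ to $\widehat{\mathcal{G}}$) does not perturb the conditional law on $(s,t]$ of the Poisson measure of spine fissions (resp.\ of the post-$s$ offspring counts $r_v$ along the spine). Neither is automatic; both have to be read off directly from the explicit product form of the density in \eqref{spine representation}, carefully disentangling the spine fission times, the offspring counts along the spine, and the off-spine subtrees. Once this independence is in hand the two martingale identities follow from computations already performed in the preceding Poisson-martingale lemma and in the derivation of \eqref{cond-mean1}.
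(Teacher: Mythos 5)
Your proof is correct and follows essentially the same route as the paper: both factor $\widetilde\eta_t^{(i)}$ as a $\widetilde{\mathcal{F}}_s$-measurable past piece times a residual over $(s,t]$, and both reduce the martingale identity to showing the residual has conditional expectation one via the Poisson increment identity \eqref{Possion} for part (1) and the conditional mean computation \eqref{cond-mean1} for part (2). The conditional-independence step you flag as the delicate point is exactly what the paper invokes under the heading ``by the Markov property,'' justified implicitly by the product structure of \eqref{spine representation}, so your more explicit emphasis on it is a faithful reading rather than a departure.
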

\begin{proof}  (1) For $s,t\ge 0$, by the Markov property, we have
$$\begin{array}{rl}\widetilde
P^x\left[\left.\widetilde\eta^{(1)}_{t+s}\right|\widetilde\cF_t\vee\cG\right]=
&\displaystyle\widetilde
P^x\left[\left.\prod_{v<\xi_{n_{t+s}}}A(\widetilde Y_{\zeta_v})
\cdot\exp\left(-\int^{t+s}_0((A-1)\beta)(\widetilde
Y_r)dr\right)\right|\widetilde\cF_t\vee\cG\right]\\
=&\displaystyle\prod_{v<\xi_{n_t}}A(\widetilde Y_{\zeta_v})\cdot
\exp\left(-\int^t_0((A-1)\beta)(\widetilde Y_r)dr\right)\cdot\\
&\displaystyle\widetilde P^{x}\left[\left.\prod_{\xi_{n_t}\le
v<\xi_{n_{t+s}}}A(\widetilde Y_{\zeta_v})\cdot
\exp\left(-\int^{s}_0((A-1)\beta)(\widetilde
Y_{r+t})dr\right)\right|\widetilde\cF_t\vee\cG\right]\\
=&\displaystyle\widetilde\eta_t^{(1)}\exp\left(-\int^{s}_0((A-1)\beta)(\widetilde
Y_{r+t})dr\right)\displaystyle\widetilde
P^x\left[\left.\prod_{\xi_{n_{t}}\le v<\xi_{n_{t+s}}}A(\widetilde
Y_{\zeta_v}) \right|\cG\right].
\end{array}$$
For fixed $t>0$, given the path of $\widetilde Y$, the collection of
fission times $\{\{\zeta_v: \xi_{n_{t}}\le v<\xi_{n_{t+s}}\}: s\ge
0\} $ is a Poisson random measure  with intensity $\beta(\widetilde
Y_{t+s})ds$, and has law $L^{\beta(\widetilde Y_{t+\cdot})}$. It
follows from \eqref{Possion} that
$$P^x\left[\left.\prod_{\xi_{n_{t}}\le v<\xi_{n_{t+s}}}A(\widetilde
Y_{\zeta_v})
\right|\cG\right]=\exp\left(\int^{s}_0((A-1)\beta)(\widetilde
Y_{r+t})dr\right).$$ Then we get
$$
\widetilde P^x\left[\left.\widetilde\eta^{(1)}_{t+s}
\right|\widehat\cF_t\vee\cG\right]=\widetilde\eta_t^{(1)}.
$$

(2) For $s,t\ge 0$, by the Markov property, we have
$$\begin{array}{rl}P^x\left[\left.\widetilde\eta^{(2)}_{t+s}
\right|\widetilde\cF_t\vee\widehat\cG\right]
=&\displaystyle\widetilde
P^x\left[\left.\prod_{v<\xi_{n_{t+s}}}\frac{r_v}{A(\widetilde
Y_{\zeta_v})}\right|\widetilde\cF_t\vee\widehat\cG\right]\\
=& \displaystyle\prod_{v<\xi_{n_t}}\frac{r_v}{A(\widetilde
Y_{\zeta_v})}\cdot\displaystyle\widetilde
P^{x}\left[\left.\prod_{\xi_{n_t}\leq
v<\xi_{n_{s+t}}}\frac{r_v}{A(\widetilde
Y_{\zeta_v})}\right|\widehat\cG\right]\\
=&\displaystyle\widetilde\eta^{(2)}_t,
\end{array}
$$ where in the last
equality we used \eqref{cond-mean1}. Then we have
$$
\widetilde P^x\left[\left.\widetilde\eta^{(2)}_{t+s}\right|
\widetilde\cF_t\vee\widehat\cG\right]=\widetilde\eta^{(2)}_t.
$$
\qed
\end{proof}

The effect of a change of measure using the
martingale $\widetilde\eta_t^{(1)}$ will
increase the fission rate along the spine from $\beta(\widetilde
Y_t)$ to $(A\beta)(\widetilde Y_t)$.
The effect of a change of measure using the martingale
$\widetilde\eta_t^{(2)}$ will change offspring distribution from
$P(\widetilde{Y}_{\zeta_i})=(p_k(\widetilde Y_{\zeta_i}))_{k\ge
1}$ to the size-biased distribution $\widehat
P(\widetilde{Y}_{\zeta_i})=(\hat p_k(Y_{\zeta_i}))_{k\ge 1}$,
where $\hat p_k(y)$ is defined by
$$
\hat p_k(y)=\frac{kp_k(y)}{A(y)},\quad  k\ge 1,y\in E.
$$

Define
$$\widetilde\eta^{(3)}_t(\phi):=\frac{\phi(\widetilde
Y_{t})}{\phi(x)}\exp\left(-\int_0^{t}{(\lambda_1-(A-1)\beta)}(\widetilde
Y_s)ds\right).$$ $\widetilde\eta^{(3)}_t(\phi)$ is a $\widetilde
P^x$-martingale with respect to $\{\cG_t, t\ge 0\}$, and it is also
a $\widetilde P^x$-martingale with respect to $\{\widetilde\cF_t,
t\ge 0\}$, since $\widetilde\eta_t^{(3)}(\phi)$ can be expressed as
\begin{equation}\label{decom-eta}
\widetilde\eta^{(3)}_t(\phi)=\sum_{u\in
L_t}\phi(x)^{-1}\phi(\widetilde
Y_u(t))\exp\left(-\int_0^{t}{(\lambda_1-(A-1)\beta)}(\widetilde
Y_s)ds\right)I_{\{u\in\xi\}}.
\end{equation}
And then we define
$$\begin{array}{rl}\widetilde
\eta_t(\phi)=&\displaystyle\prod_{v<\xi_{n_t}}r_v\exp
\left(-\int^t_0((A-1)\beta)(\widetilde
Y_s)ds\right)\times
\widetilde\eta^{(3)}_t(\phi)\\
=&\displaystyle\prod_{v<\xi_{n_t}}\frac{r_v}{A(\widetilde
Y_{\zeta_v})}\prod_{v<\xi_{n_t}}A(\widetilde
Y_{\zeta_v})\exp\left(-\int^t_0((A-1)\beta)(\widetilde
Y_s)ds\right)\times \widetilde\eta^{(3)}_t(\phi)\\
=&\widetilde\eta_t^{(1)}\times\widetilde\eta_t^{(2)}
\times\widetilde\eta^{(3)}_t(\phi).\end{array}$$

The following result corresponds to Definition 5.6 in \cite{HH1}.

\begin{lemma} $\widetilde
\eta_t(\phi)$ is a $\widetilde P^x$-martingale with respect to
$\widetilde\cF_t$.
\end{lemma}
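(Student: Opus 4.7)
The strategy is to assemble $\widetilde\eta_t(\phi)=\widetilde\eta_t^{(1)}\widetilde\eta_t^{(2)}\widetilde\eta_t^{(3)}(\phi)$ by applying the product Lemma \ref{mart-prod} twice, using the three constituent martingales together with the inclusion $\cG\subset\widehat\cG\subset\widetilde\cG\subset\widetilde\cF$. The three facts to combine are: (a) $\widetilde\eta_t^{(3)}(\phi)$ is a $\widetilde P^x$-martingale w.r.t.\ $\widetilde\cF_t$ (noted right after its definition, with the decomposition \eqref{decom-eta}); (b) $\widetilde\eta_t^{(1)}$ is a $\widetilde P^x(\cdot|\cG)$-martingale w.r.t.\ $\widetilde\cF_t$; (c) $\widetilde\eta_t^{(2)}$ is a $\widetilde P^x(\cdot|\widehat\cG)$-martingale w.r.t.\ $\widetilde\cF_t$.

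\noindent\textbf{Step 1: combine $\widetilde\eta^{(3)}(\phi)$ and $\widetilde\eta^{(1)}$.} Observe that $\widetilde\eta_t^{(3)}(\phi)$ is a deterministic functional of the spine trajectory $(\widetilde Y_s)_{s\le t}$, hence is $\cG_t$-measurable (in particular $\cG$-measurable). I will apply Lemma \ref{mart-prod} with $\cK_1=\{\emptyset,\widetilde\cT\}$, $\cK_2=\cG$, $U^1_t=\widetilde\eta_t^{(3)}(\phi)$, $U^2_t=\widetilde\eta_t^{(1)}$, and $\cH_t=\widetilde\cF_t$. The integrability $\widetilde P^x\bigl(|\widetilde\eta_t^{(1)}\widetilde\eta_t^{(3)}(\phi)|\bigr)<\infty$ is checked by conditioning first on $\cG$ (which reduces it to $\widetilde P^x[\widetilde\eta_t^{(3)}(\phi)]=1$ via \eqref{invar}). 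The conclusion is that $\widetilde\eta_t^{(1)}\widetilde\eta_t^{(3)}(\phi)$ is a $\widetilde P^x$-martingale w.r.t.\ $\widetilde\cF_t$.

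\noindent\textbf{Step 2: multiply by $\widetilde\eta^{(2)}$.} The product from Step 1 is $\widehat\cG_t$-measurable: the factor $\widetilde\eta_t^{(3)}(\phi)$ is $\cG_t$-measurable, while $\widetilde\eta_t^{(1)}$ is a functional of the spine trajectory together with the fission times $\{\zeta_v:v<\xi_{n_t}\}$ along the spine, which are exactly the additional data generating $\widehat\cG_t$ from $\cG_t$. I apply Lemma \ref{mart-prod} again with $\cK_1=\{\emptyset,\widetilde\cT\}$, $\cK_2=\widehat\cG$, $U^1_t=\widetilde\eta_t^{(1)}\widetilde\eta_t^{(3)}(\phi)$, $U^2_t=\widetilde\eta_t^{(2)}$, and $\cH_t=\widetilde\cF_t$. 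Integrability follows by conditioning on $\widehat\cG$ and using \eqref{cond-mean1}. The conclusion is that $\widetilde\eta_t(\phi)=\widetilde\eta_t^{(1)}\widetilde\eta_t^{(2)}\widetilde\eta_t^{(3)}(\phi)$ is a $\widetilde P^x$-martingale w.r.t.\ $\{\widetilde\cF_t,\,t\ge0\}$, as claimed.

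\noindent\textbf{Main obstacle.} The mechanical step is the double application of Lemma \ref{mart-prod}; the genuine content is the measurability check in Step 2, namely that $\widetilde\eta_t^{(1)}$ is $\widehat\cG_t$-measurable (so that $U^1_t\in\cK_2$ holds). This requires unpacking that the product $\prod_{v<\xi_{n_t}}A(\widetilde Y_{\zeta_v})$ is determined by the spine path together with the spine fission times $\{\zeta_v:v<\xi_{n_t}\}$, both of which are encoded in $\widehat\cG_t$ by definition, but do \emph{not} involve the offspring numbers $r_v$ (which live in $\widetilde\cG_t\setminus\widehat\cG_t$). Pinning down these $\sigma$-field relationships, and verifying the ordering $\cK_1\subset\cK_2$ in each application, are where care is needed; once they are in place the martingale property transfers through Lemma \ref{mart-prod} automatically.
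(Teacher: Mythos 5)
Your proof is correct and takes essentially the same approach as the paper: a double application of Lemma \ref{mart-prod} over the nested $\sigma$-fields $\cG\subset\widehat\cG$. The only difference is the order of assembly — you form $\widetilde\eta^{(3)}_t(\phi)\widetilde\eta^{(1)}_t$ first (with $\cK_1$ trivial, $\cK_2=\cG$) and then multiply by $\widetilde\eta^{(2)}_t$ (with $\cK_1$ trivial, $\cK_2=\widehat\cG$), whereas the paper first forms $\widetilde\eta^{(1)}_t\widetilde\eta^{(2)}_t$ (with $\cK_1=\cG$, $\cK_2=\widehat\cG$, yielding a $\widetilde P^x(\cdot|\cG)$-martingale) and then multiplies by $\widetilde\eta^{(3)}_t(\phi)$; both orderings satisfy the hypotheses of Lemma \ref{mart-prod}, and your measurability checks (in particular $\widetilde\eta^{(1)}_t\in\widehat\cG_t$ since it depends only on the spine path and spine fission times, not on the $r_v$) are the right ones.
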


\begin{proof}
$\widetilde\eta_t^{(1)}$ is a $\widetilde P^x(\cdot |\
{\cG})$-martingale with respect to $\{\widetilde\cF_t,t\ge 0\}$, and
$\widetilde\eta_t^{(2)}$ is a $\widetilde P^x(\cdot
|\widehat\cG)$-martingale with respect to $\{\widetilde\cF_t,t\ge
0\}$. Note that $\cG\subset\widehat\cG$, and
$\widetilde\eta^{(1)}_t\in\widehat \cG,
\widetilde\eta^{(2)}_t\in\widetilde \cF_t$ for any $t\ge 0$. Using
Lemma \ref{mart-prod},
$\widetilde\eta^{(1)}_t\widetilde\eta^{(2)}_t$ is a $\widetilde
P^x(\cdot|\cG)$-martingale with respect to $\{\widetilde\cF_t,t\ge
0\}$. Note that $\widetilde\eta^{(3)}_t(\phi)\in \cG$ and
$\widetilde\eta^{(1)}_t\widetilde\eta^{(2)}_t\in\widetilde \cF_t$
for any $t\ge 0$. Using Lemma \ref{mart-prod} again, we see that
$\widetilde
\eta_t(\phi)=\widetilde\eta^{(1)}_t\widetilde\eta^{(2)}_t\widetilde\eta^{(3)}_t(\phi)$
is a $\widetilde P^x$-martingale with respect to
$\{\widetilde\cF_t,t\ge 0\}$.
\qed\end{proof}

\begin{lemma} $M_t(\phi)$ is the projection of $\widetilde\eta_t(\phi)$
onto $\cF_t$, i.e.,
$$M_t(\phi)=\widetilde P^x(\widetilde\eta_t(\phi)|\cF_t).$$
\end{lemma}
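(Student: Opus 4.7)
The plan is to compute $\widetilde\eta_t(\phi)$ in a simpler closed form and then apply formula \rf{decom-P}. First, I would multiply together the three factors $\widetilde\eta_t^{(1)}$, $\widetilde\eta_t^{(2)}$, $\widetilde\eta_t^{(3)}(\phi)$. The $A(\widetilde Y_{\zeta_v})$ in the denominators of $\widetilde\eta_t^{(2)}$ cancel against the $A(\widetilde Y_{\zeta_v})$ in $\widetilde\eta_t^{(1)}$, and the two exponential factors combine to $e^{-\lambda_1 t}$ since
$$
-\int_0^t ((A-1)\beta)(\widetilde Y_s)\,ds - \int_0^t (\lambda_1 - (A-1)\beta)(\widetilde Y_s)\,ds = -\lambda_1 t.
$$
So I obtain the compact identity
$$
\widetilde\eta_t(\phi) = \prod_{v<\xi_{n_t}} r_v \cdot \frac{\phi(\widetilde Y_t)}{\phi(x)}\, e^{-\lambda_1 t}.
$$

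Next I would rewrite this in the form \rf{decom}. On the event $\{u\in\xi\}$ with $u\in L_t$, the spine passes through $u$, so $\xi_{n_t}=u$, the spine position $\widetilde Y_t$ equals $Y_u(t)$, and $\prod_{v<\xi_{n_t}} r_v = \prod_{v<u} r_v$. Therefore
$$
\widetilde\eta_t(\phi) = \sum_{u\in L_t} f_u(\tau,M)\, I_{\{u\in\xi\}}, \qquad f_u(\tau,M) := \prod_{v<u} r_v \cdot \frac{\phi(Y_u(t))}{\phi(x)}\, e^{-\lambda_1 t},
$$
and each $f_u$ is $\cF_t$-measurable.

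Now I would invoke the computation preceding \rf{decom-P}, which gives $\widetilde P^x(I_{\{u\in\xi\}}\mid\cF_t) = \prod_{v<u} r_v^{-1}$ for any $u\in L_t$. Taking the conditional expectation term by term,
$$
\widetilde P^x\bigl(\widetilde\eta_t(\phi)\bigm|\cF_t\bigr) = \sum_{u\in L_t} \prod_{v<u} r_v \cdot \frac{\phi(Y_u(t))}{\phi(x)}\, e^{-\lambda_1 t} \cdot \prod_{v<u}\frac{1}{r_v} = \frac{e^{-\lambda_1 t}}{\phi(x)} \sum_{u\in L_t} \phi(Y_u(t)) = e^{-\lambda_1 t}\,\frac{\langle\phi,X_t\rangle}{\phi(x)},
$$
which is precisely $M_t(\phi)$ since $\mu=\delta_x$ and hence $\langle\phi,\mu\rangle=\phi(x)$.

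I do not anticipate a serious obstacle: the proof is essentially an algebraic collapse of the product of martingales followed by one application of the spine-projection identity. The only point to watch is the bookkeeping that all $\widetilde Y$-dependent quantities may be rewritten as $Y_u$-dependent quantities on $\{u\in\xi\}$, so that the summand $f_u$ is genuinely $\cF_t$-measurable and Theorem \ref{decom-f} applies.
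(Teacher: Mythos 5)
Your proof is correct and follows essentially the same route as the paper: write $\widetilde\eta_t(\phi)$ as a sum over $u\in L_t$ of $\cF_t$-measurable coefficients times $I_{\{u\in\xi\}}$, then apply $\widetilde P^x(I_{\{u\in\xi\}}\mid\cF_t)=\prod_{v<u}r_v^{-1}$. The only difference is cosmetic: you derive the compact form $\widetilde\eta_t(\phi)=\prod_{v<\xi_{n_t}}r_v\cdot\phi(\widetilde Y_t)\phi(x)^{-1}e^{-\lambda_1 t}$ by explicitly multiplying the three factors and cancelling, whereas the paper states the resulting decomposition directly (via equation \rf{decom-eta}).
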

\begin{proof} By \eqref{decom-eta}, we have
$$
\widetilde\eta_t(\phi)=\sum_{u\in L_t}\prod_{v<u}r_ve^{-\lambda_1
t}\phi(x)^{-1}\phi(Y_u(t))I_{(u\in\xi)}.
$$
Then
$$
\begin{array}{rl}\widetilde P^x(\widetilde\eta_t(\phi)|\cF_t)
=&\displaystyle\sum_{u\in
L_t}e^{-\lambda_1t}\phi(x)^{-1}\phi(Y_u(t))\prod_{v<u}r_v\
\widetilde P^x(I_{(u\in\xi)}|\cF_t)\\
=&\displaystyle\sum_{u\in
L_t}e^{-\lambda_1t}\phi(x)^{-1}\phi(Y_u(t))=M_t(\phi),
\end{array}
$$
where in the second equality we used the fact that $\widetilde
P^x(I_{(u\in L_t \cap\xi)}|\cF_t)=I_{(u\in
L_t)}\times\prod_{v<u}r_v^{-1}.$\qed
\end{proof}

Now we define a probability measure $\widetilde Q^x$ on
$(\widetilde\cT, \widetilde\cF)$ by
$$
\frac{d \widetilde Q^x}{d \widetilde
P^x}\left|_{\widetilde\cF_t}\right.=\widetilde\eta_t(\phi),
$$ which
says that on $\widetilde\cF_t$,
$$
\begin{array}{rl}d \widetilde Q^x=&\displaystyle\widetilde
\eta_t(\phi)d \widetilde P^x\\
=&\displaystyle \frac{\phi(\widetilde
Y_{t\wedge\tau^B})}{\phi(x)}\exp\left(-\int_0^{t\wedge\tau^B}
(\lambda_1-(A-1)\beta)(\widetilde
Y_s)ds\right)d\Pi_x(\widetilde Y)\\
&\displaystyle\times \exp\left(-\int^t_0((A-1)\beta)(\widetilde
Y_s)ds\right) dL^{\beta(\widetilde Y)}
\prod_{v<\xi_{n_t}}p_{r_v}(\widetilde Y_{\zeta_v}) \prod_{j:\ vj\in
O_v}dP^{\widetilde
Y_{\zeta_v}}_{t-\zeta_v}((\tau, M)^v_j)\\
=&\displaystyle d\Pi^\phi_x(\widetilde Y) dL^{A\beta(\widetilde
Y)}({\bf n}) \prod_{v<\xi_{n_t}} \frac{p_{r_v}(\widetilde
Y_{\zeta_v})}{A(\widetilde Y_{\zeta_v})} \prod_{j:\ vj\in
O_v}dP^{\widetilde
Y_{\zeta_v}}_{t-\zeta_v}((\tau, M)^v_j)\\
=&\displaystyle d\Pi^\phi_x(\widetilde Y) dL^{A\beta(\widetilde
Y)}({\bf n}) \prod_{v<\xi_{n_t}}  \hat p_{r_v}(\widetilde
Y_{\zeta_v})\prod_{v<\xi_{n_t}}\frac{1}{r_v} \prod_{j:\ vj\in
O_v}dP^{\widetilde
 Y_{\zeta_v}}_{t-\zeta_v}((\tau,
M)^v_j).\end{array}
$$
Thus the change of measure from $\widetilde P^x$ to $\widetilde Q^x$
has three effects: the spine will be changed to a Hunt process under
$\Pi^\phi_x$, its fission times will be increased and the
distribution of its family sizes will be sized-biased. More
precisely, under $\widetilde Q^x$,

(i) the spine process $\widetilde Y_t$ moves according to the
measure $\Pi^\phi_x$;

(ii)  the fission times along the spine occur at an accelerated
intensity $(A\beta)(\widetilde Y_t)dt$;

(iii) at the fission time of node $v$ on the spine, the single spine
particle is replaced by  a random number $r_v$ of offspring with
size-biased offspring distribution $\widehat P(\widetilde
Y_{\zeta_v}):=\left(\hat p_k(\widetilde Y_{\zeta_v})\right)_{k\ge
1}$, where $\hat p_k(y)$ is defined by $\hat p_k(y):=\frac{k
p_k(y)}{A(y)}$, $k=1, 2.\cdots$,  $y\in E$;

(iv) the spine is chosen uniformly from the $r_v$ particles at the
fission point $v$;

(v) each of the remaining $r_v-1$ particles $vj\in O_v$ gives rise
to independent subtrees $(\tau, M)^{v}_j$  which evolve as
independent subtrees determined by the probability measure
$P^{\widetilde Y_{\zeta_v}}$ shifted to the time of creation.

We define a measure $Q^x$ on $(\widetilde\cT, \cF)$ by
$$
Q^x:=\widetilde Q^x|_{\cF}.
$$
It follows from Theorem 6.4 in \cite{HH1} and its proof that $Q^x$
is a martingale change of measure by the martingale $M_t(\phi)$:
$$
\left.\frac{d Q^x}{d P^x}\right|_{{\cal F}_t}=M_t(\phi).
$$

\begin{thm}[Spine decomposition]
We have the following spine decomposition for the martingale $M_t(\phi)$:
\begin{equation}\label{spine-decom}
\widetilde Q^x\left[\phi(x)M_t(\phi)\Big|\widetilde{\cG}\right] =
\phi(\widetilde Y_t)e^{-\lambda_1t}+
\sum_{u<\xi_{n_t}}(r_u-1)\phi(\widetilde
Y_{\zeta_u})e^{-\lambda_1\zeta_u}.
\end{equation}
\end{thm}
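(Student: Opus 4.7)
The plan is to decompose $\langle\phi,X_t\rangle$ according to the genealogy relative to the spine and then to use the explicit form of $\widetilde{Q}^x$ displayed just above the theorem statement. At time $t$ the population $L_t$ consists of the unique spine particle $\xi_{n_t}$, located at $\widetilde Y_t$, together with, for each spine ancestor $v<\xi_{n_t}$ and each off-spine child $vj\in O_v$, the descendants of $vj$ alive at $t$. Writing $\langle\phi,X_t^{v,j}\rangle$ for the sum of $\phi$-values over that last collection, I get the deterministic identity
$$
\phi(x)M_t(\phi)=e^{-\lambda_1 t}\langle\phi,X_t\rangle
= e^{-\lambda_1 t}\phi(\widetilde Y_t)
+ \sum_{v<\xi_{n_t}}\sum_{j:\,vj\in O_v} e^{-\lambda_1 t}\langle\phi,X_t^{v,j}\rangle.
$$

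The first term on the right is already $\widetilde{\cG}$-measurable. For the off-spine contributions I apply $\widetilde{Q}^x[\,\cdot\,|\widetilde{\cG}]$. From the Radon-Nikodym formula for $d\widetilde{Q}^x$ displayed just above, under $\widetilde{Q}^x$ the subtrees $\{(\tau,M)^v_j:\ v<\xi_{n_t},\ vj\in O_v\}$ are, conditionally on $\widetilde{\cG}$, mutually independent, and each $(\tau,M)^v_j$ has the law $P^{\widetilde Y_{\zeta_v}}_{t-\zeta_v}$ of the original branching Hunt process started at the location $\widetilde Y_{\zeta_v}$ at time $\zeta_v$. Combining \eqref{expX} applied to such a subtree with the eigenfunction identity \eqref{invar}, I find
$$
\widetilde{Q}^x\bigl[e^{-\lambda_1 t}\langle\phi,X_t^{v,j}\rangle \,\big|\,\widetilde{\cG}\bigr]
= e^{-\lambda_1 t}\,P^{(1-A)\beta}_{t-\zeta_v}\phi(\widetilde Y_{\zeta_v})
= e^{-\lambda_1\zeta_v}\phi(\widetilde Y_{\zeta_v}).
$$

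Finally, for each $v<\xi_{n_t}$ there are exactly $r_v-1$ indices $j$ with $vj\in O_v$ (the spine absorbs one of the $r_v$ offspring), so the inner sum over $j$ contributes $(r_v-1)e^{-\lambda_1\zeta_v}\phi(\widetilde Y_{\zeta_v})$, and summing over $v<\xi_{n_t}$ and adding the spine term gives \eqref{spine-decom}. The main obstacle is the second step: justifying the conditional distribution of the off-spine subtrees given $\widetilde{\cG}$. This should be read off directly from the Radon-Nikodym description of $\widetilde{Q}^x$, whose density $\widetilde{\eta}_t(\phi)$ touches only the spine (it changes the spine trajectory law to $\Pi^\phi_x$, accelerates spine fissions to rate $A\beta$, and size-biases the family sizes along the spine) while leaving the factors $\prod_{vj\in O_v} dP^{\widetilde Y_{\zeta_v}}_{t-\zeta_v}((\tau,M)^v_j)$ for the off-spine subtrees intact; once this is clean, the remainder is a direct application of the many-to-one formula to each subtree.
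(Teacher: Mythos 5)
Your proposal is correct and follows essentially the same route as the paper: decompose $\langle\phi,X_t\rangle$ into the spine particle plus the off-spine subtrees rooted at each $vj\in O_v$, observe from the Radon--Nikodym formula that under $\widetilde Q^x$ given $\widetilde{\cG}$ these subtrees are independent with the original branching law $P^{\widetilde Y_{\zeta_v}}$, and apply the many-to-one formula \eqref{expX} together with the eigenfunction identity \eqref{invar} to each subtree. The paper packages that last step as the statement that $M^{u,j}_t(\phi)$ is a $\widetilde\cG$-conditional martingale equal to $1$ in mean, but the computation you give is precisely how one verifies that fact, so the two presentations coincide.
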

\begin{proof}
We first decompose the martingale $\phi(x)M_t(\phi)$ as
$$
\phi(x)M_t(\phi)=e^{-\lambda_1t}\phi(\widetilde Y_{t})
+e^{-\lambda_1t}\sum_{u\in L_t, u\neq\xi_{n_t}}\phi(Y_u(t)).
$$
The individuals $\{u\in L_t, u\neq\xi_{n_t}\}$ can be partitioned
into subtrees created from fissions along the spines. That is, each
node $u<\xi_{n_t}$ in the spine $\xi$ has given birth at time
$\zeta_u$ to $r_u$ offspring among which one has been chosen as a
node of the spine whilst the other $r_u-1$ individuals go off to
make the subtree $(\tau, M)^u_j$.  Put
$$
X^j_t=\sum_{v\in L_t, v\in(\tau, M)^u_j}\delta_{Y_v(t)}(\cdot),\quad
t\ge \zeta_u.
$$
$(X^j_t, t\ge \zeta_u)$ is a $(Y, \beta, \psi)$-branching Hunt
process with birth time $\zeta_u$ and staring point $\widetilde
Y_{\zeta_u}$.  Then
 \begin{equation}\label{decom}\phi(x)M_t(\phi)=e^{-\lambda_1t}\phi(\widetilde
 Y_{t})+\sum_{u<\xi_{n_t}}
 \sum_{j:\ uj\in O_u}M_{t}^{u,j}(\phi)\phi(\widetilde{Y}_{\zeta_u})e^{-\lambda_1\zeta_u},\end{equation}
where
$$
M^{u,j}_{t}(\phi):=e^{-\lambda_1
(t-\zeta_u)}\frac{\langle\phi, X_{t-\zeta_u}^
j\rangle}{\phi(\widetilde{Y}_{\zeta_u})}
$$
is, conditional on
$\widetilde\cG$, a $\widetilde P^x$-martingale on the subtree
$(\tau, M)^u_j$, and therefore
$$\widetilde P^x(M^{u,j}_{t}(\phi)|\widetilde\cG)=1.$$
Thus taking $\widetilde Q^x$ conditional expectation of \eqref{decom}
gives
$$
\widetilde Q^x\left[\phi(x)M_t^\phi\Big|\widetilde{\cG}\right]
 = \phi(\widetilde Y_t)e^{-\lambda_1t}+
 \sum_{u<\xi_{n_t}}(r_u-1)\phi(\widetilde Y_{\zeta_u})e^{-\lambda_1\zeta_u},
$$
which completes the proof.\qed
\end{proof}

\bigskip

\section{Proof of the main result}
First, we give two lemmas.  The first lemma is basically
\cite[Theorem 4.3.3]{D2}.
\begin{lemma}\label{Durrett2}
 Suppose that $\P$ and $\Q$ are two probability measures on a
 measurable  space
$(\Omega, {\cal F}_{\infty})$ with filtration $({\cal F}_t)_{t\ge
0}$, such that for some nonnegative martingale $Z_t$,
$$\frac{d\Q}{d\P}\Big|_{{\cal F}_t}=Z_t.$$
The limit $Z_{\infty}:=\limsup_{t\to\infty}Z_t$ therefore exists and
is finite almost surely under $\P$. Furthermore, for any $F\in{\cal
F}_{\infty}$
$$\Q(F)=\int_FZ_{\infty}d\P+\Q(F\cap\{Z_{\infty}=\infty\}),$$
and consequently,
$$\begin{array}{rl}&(a)\quad \P(Z_{\infty}=0)=1\Longleftrightarrow
\Q(Z_{\infty}=\infty)=1\\
&(b)\quad \displaystyle\int Z_{\infty}d\P=\displaystyle\int
Z_0d\P\Longleftrightarrow\Q(Z_{\infty}<\infty)=1.\end{array}$$
\end{lemma}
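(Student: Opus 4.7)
The plan is to combine Doob's nonnegative martingale convergence theorem applied to $Z_t$ under $\P$ with an analysis of the reciprocal process $1/Z_t$ under $\Q$. Since $Z_t$ is a nonnegative $\P$-martingale with $\int Z_t\,d\P = \int Z_0\,d\P < \infty$, Doob's convergence theorem yields a $\P$-a.s. finite limit $Z_\infty := \lim_{t\to\infty} Z_t$. In particular the $\limsup$ in the statement coincides with this limit and is finite $\P$-a.s., and Fatou gives $\int Z_\infty\,d\P \le \int Z_0\,d\P$.

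I would then pass to $\Q$. First, $\Q(Z_t = 0) = \int_{\{Z_t=0\}} Z_t\,d\P = 0$, so $\tilde Z_t := 1/Z_t$ is $\Q$-a.s.\ finite. A short calculation from $d\Q/d\P|_{{\cal F}_t} = Z_t$ shows that $\tilde Z_t$ is a nonnegative $\Q$-supermartingale: for $A \in {\cal F}_s$ with $s \le t$,
$$\int \tilde Z_t I_A\,d\Q = \int Z_t\tilde Z_t I_A\,d\P = \P(A \cap \{Z_t > 0\}) \le \P(A \cap \{Z_s > 0\}) = \int \tilde Z_s I_A\,d\Q,$$
using that $\{Z_s = 0\} \subset \{Z_t = 0\}$ $\P$-a.s.\ (zero is absorbing for a nonnegative martingale). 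Hence $\tilde Z_t$ converges $\Q$-a.s.\ to some $\tilde Z_\infty \in [0,\infty)$, so that $Z_t$ tends to a finite positive limit on $\{\tilde Z_\infty > 0\}$ while $Z_t \to \infty$ on $\{\tilde Z_\infty = 0\}$. In particular, $Z_\infty = \limsup_t Z_t$ exists $\Q$-a.s.\ in $[0,\infty]$, and $\{Z_\infty < \infty\} = \{\tilde Z_\infty > 0\}$ $\Q$-a.s.

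To prove the decomposition formula, I would introduce the stopping times $T_n := \inf\{t : Z_t > n\}$ and use that $Z_t \le n$ on $\{T_n > t\}$. For $F \in {\cal F}_{t_0}$ and $t \ge t_0$, $\Q(F \cap \{T_n > t\}) = \int_{F \cap \{T_n > t\}} Z_t\,d\P$. Since $Z_t I_{\{T_n > t\}} \le n$ and $I_{\{T_n > t\}} \downarrow I_{\{T_n = \infty\}}$ while $Z_t \to Z_\infty$ $\P$-a.s., bounded convergence yields $\Q(F \cap \{T_n = \infty\}) = \int_{F \cap \{T_n = \infty\}} Z_\infty\,d\P$. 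A monotone-class argument extends this to arbitrary $F \in {\cal F}_\infty$, and letting $n \to \infty$ together with $\bigcup_n \{T_n = \infty\} = \{\sup_t Z_t < \infty\} = \{Z_\infty < \infty\}$ (the last equality $\Q$-a.s.\ by the reciprocal argument above, and $\P$-a.s.\ from Doob's inequality) delivers
$$\Q(F) = \int_F Z_\infty\,d\P + \Q(F \cap \{Z_\infty = \infty\}).$$

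Consequences (a) and (b) fall out by taking $F = \Omega$: (a) is the equivalence $\P(Z_\infty = 0) = 1 \iff \int Z_\infty\,d\P = 0 \iff \Q(Z_\infty = \infty) = 1$, and (b) says $\int Z_\infty\,d\P = \int Z_0\,d\P$ holds iff the singular term $\Q(Z_\infty = \infty)$ vanishes. The main obstacle is the rigorous identification of the $\P$-singular part of $\Q$ with the event $\{Z_\infty = \infty\}$; the reciprocal-supermartingale device under $\Q$ is what makes this clean, since it pins down the $\Q$-behavior of $Z_t$ precisely on the set where $\P$-mass is lost in the limit.
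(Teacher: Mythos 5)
Your proof is correct, but note that the paper does not supply a proof of this lemma at all: it simply introduces it as ``basically Theorem~4.3.3'' of Durrett's textbook and then uses it as a black box. So there is no in-paper argument to compare against; what you have written is a self-contained proof of the cited result.

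The structure you use is the standard one and is sound: Doob convergence under $\P$ gives the finite $\P$-a.s.\ limit; the change-of-measure identity shows $\tilde Z_t = 1/Z_t$ is a nonnegative $\Q$-supermartingale (your computation $\int \tilde Z_t I_A\,d\Q = \P(A\cap\{Z_t>0\})$ together with absorption of $\{Z_\cdot=0\}$ is exactly the right observation), whence $Z_t$ has a limit in $(0,\infty]$ $\Q$-a.s.; and the stopping times $T_n=\inf\{t:Z_t>n\}$ plus bounded convergence give $\Q(F\cap\{T_n=\infty\})=\int_{F\cap\{T_n=\infty\}}Z_\infty\,d\P$, which after the $\pi$--$\lambda$ extension and $n\to\infty$ yields the Lebesgue-type decomposition and hence (a) and (b). The only point worth making explicit, since the paper applies this in continuous time, is that you tacitly use a right-continuous (cadlag) version of $Z_t$ so that $T_n$ is a genuine stopping time and $\{T_n>t\}\subset\{\sup_{s\le t}Z_s\le n\}$; in the application $Z_t = M_t(\phi)$ this is available via the usual regularization of martingales with respect to the augmented right-continuous filtration. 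With that caveat noted, the argument is complete.
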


Now we are going to give a lemma which is the key to the proof of
Theorem \ref{maintheorem}. To state this lemma, we need some  more
notation.  Note that under $\widetilde Q^x$, given $\widetilde\cG$,
$N_t:=\{\{(\zeta_{\xi_i},\ r_{\xi_i}): i=0, 1,2, \cdots,
n_t-1\}:t\geq 0\}$ is a Poisson point process with instant intensity
measure $(A\beta)(\widetilde Y_t)dtd\widehat P(\widetilde Y_t )$ at
time $t$, where for each $y\in E$, $\widehat P(y)$ is the
size-biased probability measure on $\N$ defined in Lemma
\ref{HH1-results}. To simplify notation, $\zeta_{\xi_i}$ and
$r_{\xi_i}$ will be denoted as $\zeta_i$ and $r_i$, respectively.

Recall that $l(x)=\sum_{i=2}^\infty (i\phi(x))\log^+( i\phi(x))\,
p_i(x).$

\begin{lemma}\label{lemma1}

(1) If $\int_E\widetilde{\phi}(y)\beta(y)l(y)m(dy)<\infty$, then
$$
\sum^\infty_{i=0}{e}^{-\lambda_1\zeta_i}r_i
\phi(\widetilde{Y}_{\zeta_i})<\infty, \quad \widetilde
Q^x-\mbox{a.s.}
$$

 (2) If $
\int_E\widetilde{\phi}(y)\beta(y)l(y)m(dy)=\infty$,  then
$$
\limsup_{i\rightarrow\infty}e^{-\lambda_1\zeta_i}r_i\phi(
\widetilde{Y}_{\zeta_i})=\infty,\quad \widetilde Q^x-\mbox{a.s.}
$$
\end{lemma}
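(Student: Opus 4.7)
The plan is to exploit the Cox process representation of the marked fission points on the spine under $\widetilde Q^x$. Conditionally on the spine path $\widetilde Y$, the family $\{(\zeta_i,r_i)\}$ forms a Poisson point process on $(0,\infty)\times\{2,3,\ldots\}$ with intensity measure $\beta(\widetilde Y_t)\,k\,p_k(\widetilde Y_t)\,dt$, since the accelerated fission rate $(A\beta)(\widetilde Y_t)$ on the spine multiplied by the size-biased probability $\hat p_k=kp_k/A$ simplifies to $\beta\cdot kp_k$. Set $U_i:=e^{-\lambda_1\zeta_i}r_i\phi(\widetilde Y_{\zeta_i})$, so the claim concerns a functional of this Cox process.

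For part (1), I would establish the stronger statement $\widetilde Q^x[\sum_i(U_i\wedge 1)]<\infty$. Campbell's formula together with Fubini give
\[
\widetilde Q^x\Bigl[\sum_i(U_i\wedge 1)\Bigr]=\int_0^\infty\!\int_E p^\phi(t,x,y)\sum_{k\ge 2}\bigl(e^{-\lambda_1 t}k\phi(y)\wedge 1\bigr)\beta(y)kp_k(y)\,m(dy)\,dt.
\]
Dominating $p^\phi(t,x,y)$ by $C\phi(y)\widetilde\phi(y)$ via the estimate \eqref{IU} for $t\ge 1$, and bounding the $t\in[0,1]$ part trivially using $(e^{-\lambda_1 t}k\phi\wedge 1)\le 1$, the elementary identity $\int_0^\infty(ae^{-\lambda_1 t}\wedge 1)\,dt=\lambda_1^{-1}[\log^+a+(a\wedge 1)]$ with $a=k\phi(y)$ reduces the bound to a constant multiple of $\int_E\widetilde\phi\beta l\,dm+\|\beta A\|_\infty$, which is finite under the hypothesis. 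Hence $\sum_i(U_i\wedge 1)<\infty$ $\widetilde Q^x$-a.s.; only finitely many $U_i$ can exceed $1$, and since each $U_i$ is a.s.\ finite, $\sum_i U_i<\infty$ a.s.

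For part (2), it suffices to prove that for every fixed $n\ge 1$, the random set $\{i:U_i>n\}$ is infinite $\widetilde Q^x$-a.s., because intersecting these events over $n$ yields $\limsup_i U_i=\infty$. Conditional on the spine $\widetilde Y$ alone, $\#\{i:U_i>n\}$ is Poisson distributed with mean
\[
\mu_n(\widetilde Y):=\int_0^\infty\beta(\widetilde Y_t)\sum_{k>ne^{\lambda_1 t}/\phi(\widetilde Y_t)}kp_k(\widetilde Y_t)\,dt,
\]
so the task reduces to showing $\mu_n(\widetilde Y)=\infty$ $\Pi_x^\phi$-a.s. I would partition $[0,\infty)$ into blocks $[T_J,T_{J+1})$ of length $L/\lambda_1$ for a large fixed $L$, let $g_s(y):=\beta(y)\sum_{k>ne^{\lambda_1 s}/\phi(y)}kp_k(y)$, and put $Y_J:=\int_{T_J}^{T_{J+1}}g_{T_{J+1}}(\widetilde Y_t)\,dt$; since $g_s(y)$ is decreasing in $s$, $\mu_n\ge\sum_J Y_J$.

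The main obstacle is precisely the passage from $\widetilde Q^x[\mu_n]=\infty$ to $\mu_n=\infty$ almost surely, not merely in expectation. I would handle it by invoking the IU lower bound $p^\phi(s,x,y)\ge c\phi(y)\widetilde\phi(y)$ valid for $s\ge T_0$, which, choosing $L>\lambda_1 T_0$, produces the \emph{deterministic} lower bound
\[
\widetilde Q^x\bigl[Y_J\mid\cG_{T_J}\bigr]\ge c'\int_E g_{T_{J+1}}(y)\phi(y)\widetilde\phi(y)\,m(dy),
\]
uniform in $\widetilde Y_{T_J}$. A Fubini computation identifies $\sum_J\int_E g_{T_{J+1}}\phi\widetilde\phi\,dm$ as a constant multiple of $L^{-1}\int_E\widetilde\phi\beta l_n\,dm$ with $l_n(y):=\sum_k k\phi(y)\log^+(k\phi(y)/n)p_k(y)$. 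Since $l-l_n$ is dominated by $(\log n)A\phi$ and hence integrable with respect to $\widetilde\phi\beta\,m$, the hypothesis $\int_E\widetilde\phi\beta l\,dm=\infty$ forces this sum to diverge. The conditional Borel--Cantelli lemma for nonnegative adapted sequences then yields $\sum_J Y_J=\infty$ a.s., so $\mu_n=\infty$ a.s.; a final application of the conditional Borel--Cantelli lemma to the conditionally independent marks $\{r_i\}$ gives $\#\{i:U_i>n\}=\infty$ a.s., completing the proof.
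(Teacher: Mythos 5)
Your proposal is correct, and while part (1) follows the same route as the paper, part (2) uses a genuinely different upgrading argument worth comparing.

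For part (1), both you and the paper split the sum according to whether a summand is ``small'' or ``large.'' The paper truncates at the random level $e^{\varepsilon\zeta_i}$ and estimates the two pieces separately; you truncate $U_i$ at the fixed level $1$ and handle everything in a single Campbell-formula computation via the elementary identity $\int_0^\infty(ae^{-\lambda_1 t}\wedge 1)\,dt=\lambda_1^{-1}[\log^+a+(a\wedge 1)]$. This is a cosmetic streamlining; the estimates and the use of \eqref{IU} (in the form of \eqref{domi-p}) are the same, and both obtain the conclusion that only finitely many $U_i$ exceed the threshold, so that the tail contribution is controlled by its expectation. Note also that, as in the paper, the correct conditioning for the Cox-process description is on $\cG$ (the spine path) rather than on $\widetilde\cG$, and your phrasing ``conditional on the spine path $\widetilde Y$'' is the right one.

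For part (2), both arguments reduce to showing that the conditional intensity $\mu_n=\int_0^\infty\gamma(t,\widetilde Y_t)\,dt$ is infinite $\Pi^\phi_x$-a.s.\ and both compute that its expectation is infinite. The key difficulty is upgrading divergence in expectation to almost-sure divergence, and here the two proofs part ways. The paper bounds the \emph{second} moment of $Z_T$ (exploiting that $\gamma(s,y)$ decreases in $s$), applies Paley--Zygmund to get $\Pi^\phi_x(Z_\infty=\infty)>0$, and then uses the shift-invariance of $\{Z_\infty=\infty\}$ together with Harris recurrence of $\widetilde Y$ (via \eqref{IU} and \cite[Propositions X.3.6, X.3.9, X.3.10]{RY}) to conclude that the probability is $1$. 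You instead partition time into blocks of length $L/\lambda_1>T_0$, use the uniform two-sided bound \eqref{IU} to get a \emph{deterministic} lower bound on each conditional expectation $\widetilde Q^x[Y_J\mid\cG_{T_J}]$, observe that the sum of these lower bounds diverges (your computation reducing to $\int\widetilde\phi\beta l_n\,dm=\infty$ for each $n$ is correct, since $l-l_n\le(\log n)A\phi$), and then invoke a conditional Borel--Cantelli lemma for \emph{bounded} nonnegative adapted sequences to get $\sum_J Y_J=\infty$ a.s. The boundedness $Y_J\le\|\beta A\|_\infty L/\lambda_1$ is what makes this lemma applicable: the martingale $M_N=\sum_{J\le N}(Y_J-\widetilde Q^x[Y_J\mid\cG_{T_J}])$ has bounded increments, so either $M_N$ converges or oscillates between $\pm\infty$, and in either case $\sum Y_J$ and $\sum \widetilde Q^x[Y_J\mid\cG_{T_J}]$ diverge simultaneously. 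Your route avoids the second-moment computation and the appeal to Harris recurrence and \cite{RY}, at the cost of introducing the block bookkeeping and the bounded-increment form of conditional Borel--Cantelli, which you should state and justify explicitly (it is not the L\'evy extension for indicator events). One small wrinkle: the final sentence invoking ``conditional Borel--Cantelli to the conditionally independent marks'' is unnecessary --- once $\mu_n(\widetilde Y)=\infty$ a.s., the conditional count $\#\{i:U_i>n\}$ is a Poisson random variable with infinite mean and hence is a.s.\ infinite; this is exactly how the paper concludes.
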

\begin{proof}
(1) For any $\epsilon>0$,
\begin{eqnarray}\label{sum}
&&\sum^{\infty}_{i=0}{e}^{-\lambda_1\zeta_i}r_i\phi(\widetilde{Y}_{\zeta_i})\nonumber\\
&=&\sum_i e^{-\lambda_1\zeta_i}r_i\phi(\widetilde{Y}_{\zeta_i})
I_{\{r_i\phi(\widetilde{Y}_{\zeta_i})\le e^{\varepsilon
\zeta_i}\}}+\sum_i
e^{-\lambda_1\zeta_i}r_i\phi(\widetilde{Y}_{\zeta_i})
I_{\{r_i\phi(\widetilde{Y}_{\zeta_i})>e^{\varepsilon \zeta_i}\}}\nonumber\\
&=:&I+II.\end{eqnarray}
Note that \eqref{IU} implies that there is a constant
 $c>0$ such that for any $t>c$ and any nonnegative
measurable function $f$ with $||f||_{\infty}\leq 1,$
\begin{equation}\label{domi-p}
\frac{1}{2}\int_E\phi(y)\widetilde{\phi}(y)f(y)m(dy)\leq
\int_Ep^{\phi}(t, x, y)f(y)m(dy)\leq 2
\int_E\phi(y)\widetilde{\phi}(y)f(y)m(dy),\quad x\in E.
\end{equation}
Then,
\begin{eqnarray}\label{II}
&&\widetilde Q^x\left[\sum_{i}
I_{\left\{r_i\phi(\widetilde{Y}_{\zeta_i})> e^{
\varepsilon\zeta_i}\right\}}\right]\nonumber\\
&=&\Pi^{\phi}_x\left[\int_0^\infty \beta(\widetilde Y_s)
\left(\sum_{k=2}^\infty kp_k(\widetilde{Y}_s)I_{\{k\phi(
\widetilde Y_s)> e^{\epsilon s}\}}\right)ds\right]\nonumber\\
&=&\int_0^\infty ds\int_E {p^\phi(s,x,\ y)m(dy)}\left[\beta(y)
\sum_{k=2}^\infty kp_k(y)I_{\{k\phi(y)>e^{\varepsilon s}\}}\right]\nonumber\\
&=&\int_0^cds\int_E {p^\phi(s,x,\ y)m(dy)}\left[\beta(y)
\sum_{k=2}^\infty kp_k(y)I_{\{k\phi(y)>e^{\varepsilon s}\}}\right]\nonumber\\
 &&+\int_c^\infty ds\int_E {p^\phi(s,x,\ y)m(dy)}\left[\beta(y)
\sum_{k=2}^\infty kp_k(y)I_{\{k\phi(y)>e^{\varepsilon s}\}}\right]
\end{eqnarray}
Applying Fubini's theorem and using the assumption that $A$ and $\beta$
are bounded, we get
\begin{equation}\label{II-1}\int_0^cds\int_E {p^\phi(s,x,\ y)m(dy)}\left[\beta(y)
\sum_{k=2}^\infty kp_k(y)I_{\{k\phi(y)>e^{\varepsilon s}\}}\right]\le
\|\beta A\|_{\infty}\int_0^cds\le C_1,\end{equation}
where $C_1$ is positive constant which only depends on $c$. Using \eqref{domi-p}, we get
\begin{eqnarray}\label{II-2}&&\displaystyle\int_c^\infty ds
\int_E {p^\phi(s,x,\ y)m(dy)}\left[\beta(y)
\sum_{k=2}^\infty kp_k(y)I_{\{k\phi(y)>e^{\varepsilon s}\}}\right]\nonumber\\
&\le&2\displaystyle\int_E
m(dy)\phi(y)\widetilde{\phi}(y)\beta(y)\sum_{k=2}^\infty
kp_k(y)\int_0^{\frac{1}{\varepsilon}\log^+ [k\phi(y)]}ds\nonumber\\
&=&\displaystyle\frac{2}{\varepsilon}\int_E\widetilde{\phi}(y)\beta(y)l(y)m(dy).\end{eqnarray}
Combining \eqref{II}, \eqref{II-1} and \eqref{II-2}, we have
\begin{equation*}
\widetilde Q^x\left[\sum_{i}
I_{\left\{r_i\phi(\widetilde{Y}_{\zeta_i})> e^{
\varepsilon\zeta_i}\right\}}\right]
\le C_1+\frac{2}{\varepsilon}\int_E\widetilde{\phi}(y)\beta(y)l(y)m(dy).
\end{equation*}
Therefore, the condition
$\int_E\widetilde{\phi}(y)\beta(y)l(y)m(dy)<\infty$ implies that
\[\sum_{i}I_{\left\{r_i\phi(\widetilde{Y}_{\zeta_i})>
e^{\varepsilon\zeta_i}\right\}}<\infty,\quad
\widetilde Q^x-\mbox{a.s.}\]
  for all $\varepsilon>0.$  Then we have
\begin{equation}\label{big}
II<\infty,\quad \widetilde Q^x-\mbox{a.s.}
\end{equation}

Meanwhile for $\varepsilon<\lambda_1$,
\begin{eqnarray*}
\widetilde Q^x(I) &=&\widetilde
Q^x\left[\sum_{i}{e}^{-\lambda_1\zeta_i}r_i\phi(\widetilde
Y_{\zeta_i})I_{\{r_i\phi(\widetilde{Y}_{\zeta_i})\le{e}^{\varepsilon\zeta_i}\}}\right]\\
&=&\Pi_x^\phi\int_0^\infty dt e^{-\lambda_1
t}\phi(\widetilde{Y}_t)\beta(\widetilde{Y}_t)A(\widetilde Y_t)\sum_{k=2}^\infty
k\hat p_k(\widetilde Y_t)
I_{\{k\phi(\widetilde{Y}_t)\leq e^{\varepsilon t}\}}\\
&=&\Pi_x^\phi\int_0^\infty dt e^{-\lambda_1
t}\phi(\widetilde{Y}_t)\beta(\widetilde{Y}_t)A(\widetilde Y_t)\sum_{k=2}^\infty
k\frac{k}{A(\widetilde Y_t)}p_k(\widetilde Y_t)
I_{\{k\phi(\widetilde{Y}_t)\leq e^{\varepsilon t}\}}\\
&\leq &\Pi_x^\phi\int_0^\infty dt
e^{-(\lambda_1-\varepsilon)t}\beta(\widetilde{Y}_t)\sum_{k=2}^\infty
kp_k(\widetilde Y_t)I_{\{k\phi(\widetilde{Y}_t)\le e^{\varepsilon
t}\}}\\
&\leq & C_2\int_0^\infty e^{-(\lambda_1-\varepsilon)t}dt<\infty,
\end{eqnarray*}
where in the second to the last inequality we used the assumption
that $\beta$ and $A$ are bounded and $C_2$ is a positive
constant. Then we have
\begin{equation}\label{small}
I<\infty,\quad \widetilde Q^x-{\rm a.s.}
\end{equation}
 Combining \eqref{sum}, \eqref{big} and \eqref{small}, we see that
$\sum^{\infty}_{i=0}{e}^{-\lambda_1\zeta_i}r_i\phi(
\widetilde{Y}_{\zeta_i})<\infty,
\quad \widetilde Q^x-\mbox{a.s.}$\\

(2) It is enough to prove that for any $K>1$,
\begin{equation}\label{>K}
\limsup_{i\rightarrow\infty}{e}^{-\lambda_1\zeta_i}r_i\phi(
\widetilde{Y}_{\zeta_i})>K,\quad
\widetilde Q^x-{\rm a.s.}
\end{equation}
For any fixed $K>1$, define  $\gamma(t,y):=\beta(y)\sum_k
kp_k(y)I_{\{k\phi(y)>K{e}^{\lambda_1t}\}}$.
Since for any $x\in E$,
\begin{eqnarray*}
\Pi^\phi_x \int_0^T \gamma(t,\widetilde{Y}_t) dt
&=&\int^T_0dt\int_E m(dy) p^{\phi}(t, x, y)\gamma(t, y)\\
&\le&\int^T_0dt\int_E m(dy) p^{\phi}(t, x, y)\beta(y)A(y)<\infty,
\end{eqnarray*}
we have
$\int_0^T \gamma(t,\widetilde{Y}_t) dt<\infty$, $\Pi^\phi_x-{\rm a.s.}$.
Note that for any $T\in (0,\infty)$, conditional on
$\sigma(\widetilde{Y})$,
$$
\sharp\left\{i: \zeta_i\in(0, T]; r_i>K\phi(\widetilde{Y}_{
\zeta_i})^{-1}{e}^{\lambda_1\zeta_i}\right\}
$$
is a Poisson random variable with intensity
$\int_0^T\gamma(t,\widetilde{Y}_t) dt$ a.s.
Hence, to prove \eqref{>K},
 we just need to prove
\begin{eqnarray}\label{infinity identity}
Z_\infty=:\int_0^\infty\gamma(t,\widetilde{Y}_t) dt=\infty,
\quad \Pi^\phi_x-{\rm a.s.}
\end{eqnarray}

Recall the choice of the constant $c>0$ in the statement above the
inequalities \eqref{domi-p}.
Applying Fubini's theorem and \eqref{domi-p}, we get
\begin{eqnarray}\label{int-ge}
\Pi^\phi_xZ_\infty
&=& \int_0^\infty dt \int_Ep^{\phi}(t, x, y)
\gamma(t, y)m(dy)\nonumber\\
&\ge&\int_c^\infty dt \int_Ep^{\phi}(t, x, y)
\gamma(t, y)m(dy)\nonumber\\
&\ge&\frac{1}{2}\int_c^\infty dt\int_E\phi(y)\widetilde{\phi}(y)\gamma(t, y)m(dy)
=:\frac{1}{2}A_\infty.
\end{eqnarray}
Exchanging the order of integration in $A_\infty$, we get that
\begin{eqnarray}\label{lowerbound}
A_\infty
&\ge&\int_E\phi(y)\widetilde{\phi}(y)\beta(y)m(dy)\sum_k
k p_k(y)I_{\{k>K\phi(y)^{-1}\}}\left[\frac{1}{\lambda_1}
\log (k\phi(y))-\frac{\log K}{\lambda_1}-c\right]^+\nonumber\\
&\ge&C_3\int_E\phi(y)\widetilde{\phi}(y)\beta(y)m(dy)\sum_k
k \log (k\phi(y))p_k(y)I_{\{k>K\phi(y)^{-1}\}}-C_4,
 \end{eqnarray}
where
$C_3=1/\lambda_1$ and $C_4=\|\beta A\|_\infty(\log K+c\lambda_1)/\lambda_1$.
The assumption that $
\int_E\widetilde{\phi}(y)\beta(y)l(y)m(dy)=\infty$ says that
$$
\int_E\phi(y)\widetilde{\phi}(y)\beta(y)m(dy)\sum_k k\log
(k\phi(y)) p_k(y)I_{\{k>\phi(y)^{-1}\}}=\infty.
$$
Since
\begin{eqnarray*}
&&\int_E\phi(y)\widetilde{\phi}(y)\beta(y)m(dy)\sum_k
k\log(k\phi(y)) p_k(y)I_{\{\phi(y)^{-1}<k\leq K\phi(y)^{-1}\}}\\
&&\leq\log K\int_E\phi(y)\widetilde{\phi}(y)\beta(y)A(y)m(dy)
<\infty,
\end{eqnarray*}
it follows from \eqref{lowerbound} that
\begin{equation}
A_\infty=\infty.
\end{equation}
Then by \eqref{int-ge},
\begin{equation}\label{E-Z-infty}
\Pi_x^\phi Z_\infty=\infty.
\end{equation}

For any finite time $T>0,$ put
$Z_T= \int_0^T\gamma(t,
\widetilde Y_t)dt$
 and
$$
\Lambda_T
=\int_c^Tdt\int_E\phi(y)\widetilde{\phi}(y)\gamma(t,y)m(dy).
$$
Then $\lim_{T\to\infty}Z_T=Z_\infty$, and $\lim_{T\rightarrow\infty}\Lambda_T=
A_\infty=\Pi^\phi_x Z_\infty=\infty$.

 An argument similar to \eqref{int-ge} yields that there exists a constant $C_5$ which is independent of $T$
and sufficiently large, such that
\begin{equation}\label{domi-mean}
\frac{1}{C_5} \Lambda_T\le\Pi_x^\phi Z_T\le C_5 \Lambda_T.
\end{equation}
By the Paley-Zygmund inequality (see, for instance, \cite[Ex.
1.3.8]{D2}),
\begin{equation}\label{Durrett-domi}
\Pi_x^\phi\left(Z_T\geq \frac{1}{2}\Pi_x^\phi Z_T\right)\geq
\frac{(\Pi_x^\phi Z_T)^2}{4\Pi_x^\phi[Z_T^2]}.
\end{equation}
 Now we estimate $\Pi_x^\phi(Z_T^2).$
\begin{eqnarray*}
 \Pi_x^\phi Z_T^2&=&\Pi_x^\phi\int_0^T\gamma(t, \widetilde Y_t)dt
 \int_0^T\gamma(s,\widetilde Y_s)ds\\
 &=&2\Pi_x^\phi\int_0^T\gamma(t, \widetilde Y_t)dt
 \int_t^T\gamma(s, \widetilde Y_s)ds\\
&=&2\Pi_x^\phi\int_0^T\gamma(t,\widetilde Y_t)dt \int_t^{(t+c)\wedge T}\gamma(s, \widetilde Y_s)ds
+ 2\Pi_x^\phi\int_0^T\gamma(t, \widetilde Y_t)dt
 \int_{(t+c)\wedge T}^{T}\gamma(s,\widetilde Y_s)ds\\
 &=:&III+IV.
\end{eqnarray*}
 By the Markov property of $\widetilde Y$, we have
 \begin{eqnarray*}IV&=&2\Pi_x^\phi\int_0^T\gamma(t, \widetilde Y_t)dt
 \Pi_{\widetilde Y_t}^\phi\int_{(c+t)\wedge T}^{T}\gamma(s,\widetilde Y_{s-t})ds\\
&=&2\Pi_x^\phi\int_0^T\gamma(t, \widetilde Y_t)dt
 \Pi_{\widetilde Y_t}^\phi\int_{(c+t)\wedge T-t}^{T-t}\gamma(u+t,\widetilde Y_{u})du\\
 &\le&2\Pi_x^\phi\int_0^T\gamma(t, \widetilde Y_t)dt
 \Pi_{\widetilde Y_t}^\phi\int_{c}^{T}\gamma(u,\widetilde Y_{u})du\\
&= & 2\Pi_x^\phi\int_0^Tdt\gamma(t, \widetilde Y_t)
\int_{c}^{T}du\int_E m(dy) p^\phi(u, \widetilde Y_t, y)\gamma(u, y),
 \end{eqnarray*}
 where in above inequality we used the fact that $\gamma(s, y)$ is decreasing in $s$,
 which is obvious by the definition of $\gamma$.
 Using \eqref{domi-p} and \eqref{domi-mean},  we see that
 $$
 IV\leq 4\Lambda_T\Pi_x^\phi Z_T \leq  4C_5(\Pi_x^\phi Z_T)^2.
 $$
Meanwhile,
\begin{eqnarray*}
III&\le &2\Pi_x^\phi\int_0^Tdt\gamma(t, \widetilde Y_t)
 \int_t^{(t+c)\wedge T}(A\beta)(\widetilde Y_s)ds\\
 &\le&2c\|\beta A\|_{\infty}\Pi_x^\phi\int_0^T\gamma(t, \widetilde Y_t)dt\\
 &=&C_6\ \Pi^{\phi}_x Z_T,
 \end{eqnarray*}
where $C_6=2c\|\beta A\|_{\infty}$.
Therefore, there exists a constant $C>0$
which does not depend on  $T$, such that
$$\Pi_x^\phi(Z_T^2)\le C(\Pi_x^\phi(Z_T))^2.$$
 Then by
\eqref{Durrett-domi} we get
$$\Pi_x^\phi\left(Z_T\geq \frac{1}{2}\Pi_x^\phi Z_T\right)\geq (4C)^{-1},$$
and therefore,
 \begin{eqnarray*}
\Pi^\phi_x\left(Z_\infty\geq \frac{1}{2}\Pi_x^\phi Z_T\right)
 \geq\Pi^\phi_x\left(Z_T\geq \frac{1}{2}\Pi_x^\phi Z_T\right)\ge
 (4C)^{-1}>0.
 \end{eqnarray*}
 Since
$\lim_{T\rightarrow\infty}\Lambda_T=\infty$, \eqref{domi-mean} and
the above inequality imply that
$\Pi^\phi_x(Z_\infty=\infty)>0$.
Since for any $t_0>0$,  $\int_0^{t_0}
\gamma(t, \widetilde Y_t)dt
\le \|\beta A\|_\infty t_0<\infty$,
the event
$\Gamma:=\left\{Z_\infty=\infty\right\}=
\left\{\int_0^\infty\gamma(t, \widetilde Y_t)dt=\infty\right\}$ is
invariant, that is, $\theta_t^{-1}(\Gamma)=\Gamma$ for all $t\ge 0$, where
$\{\theta_t: t\ge 0\}$ are the shift operators of the process $\widetilde Y$.
It follows from \eqref{IU} and \cite[Proposition X.3.9]{RY} that the
process $\widetilde Y$ is Harris-recurrent. Thus it follows from
\cite[Propositions X.3.6 and X.3.10]{RY} that
$\Pi^\phi_x\left(Z_\infty=\infty\right)=1$,
which is \eqref{infinity identity}.  And
our second conclusion follows. \qed
\end{proof}

{\bf Proof of Theorem \ref{maintheorem}.}\quad The proof heavily
depends on the decomposition \eqref{spine-decom}.

When $\int_E\widetilde{\phi}(x)\beta(x)l(x)m(dx)<\infty$, the first
conclusion of Lemma \ref{lemma1} says that
$$
\sup_{t>0}\widetilde
Q^x\left[\phi(x)M_t(\phi)\Big|\widetilde{\cG}\right] \leq
\sum_{u\in\xi}r_u\phi(\widetilde
Y_{\zeta_u})e^{-\lambda_1\zeta_u}+\|\phi\|_{\infty}<\infty.
$$
Fatou's lemma for conditional probability implies that
$\liminf_{t\to\infty}M_t(\phi)<\infty$, $\widetilde Q^x$-a.s. The
Radon-Nikodym derivative tells us that $M_t(\phi)^{-1}$ is a
nonnegative supermartingale under $Q^x$ and therefore has a finite
limit $Q^x$-a.s. So $\lim_{t\to\infty}M_t(\phi)=M_{\infty}<\infty$,
$Q^x$-a.s.
  Lemma \ref{Durrett2}
implies that in this case,
$$P^x[M_\infty(\phi)]=\lim_{t\rightarrow\infty}P^x[M_t(\phi)]=1.$$

When $\int_E\widetilde{\phi}(x)\beta(x)l(x)m(dx)=\infty$, using
the second conclusion in Lemma \ref{lemma1}, we can get under
$\widetilde Q^x,$
\[\limsup_{t\rightarrow\infty}\phi(x)M_t(\phi)
\geq\limsup_{t\rightarrow\infty}\phi(\widetilde
Y_{\zeta_{n_t}})(r_{n_t}-1)e^{-\lambda_1\zeta_{n_t}}=\infty.\]
This yields that $M_\infty(\phi)=\infty,\ Q^x$-a.s.  Using Lemma
\ref{Durrett2} again, we get $M_\infty(\phi)=0,\ P^x$-a.s. The
proof is finished.\qed

\medskip

\noindent {\bf Acknowledgment:} We thank Simon C. Harris for
sending us the paper \cite{HH1}.
We also thank two anonymous referees for their helpful comments on
the first version of this paper.

\bigskip

\begin{singlespace}
\small

\end{singlespace}
\end{doublespace}
\vskip 0.3truein \vskip 0.3truein

 \noindent {\bf Rong-Li Liu:} LMAM School of
Mathematical Sciences, Peking
 University,  Beijing, 100871, P. R. China,
  E-mail: {\tt lrl@math.pku.edu.cn} \\

\bigskip
 \noindent {\bf Yan-Xia Ren:} LMAM School of Mathematical Sciences, Peking
 University,  Beijing, 100871, P. R. China,
  E-mail: {\tt yxren@math.pku.edu.cn} \\

\bigskip
\noindent {\bf Renming Song:} Department of Mathematics,
 The University of Illinois,  Urbana, IL 61801 U.S.A.,
  E-mail: {\tt rsong@math.uiuc.edu} \\
 \bigskip


\begin{thebibliography}{99}


\bibitem{AH} S. Asmussen and H. Hering (1976):
         Strong limit theorems for general supercritical branching processes
         with applications to branching diffusions. {\em Z.Wah verw.
         Gebiete}, {\bf 36}, 195-212.

\bibitem{BK} J. D. Biggins and A. E. Kyprianou (2004): Measure
change in multitype branching. {\em Adv. in Appl. Probab.}, {\bf
36}, 544-581.

\bibitem{C1}    B. Chauvin (1986):
         Arbres et processus de Bellman-Harris.
         {\em Ann. Inst. H. Poincar\'{e} Probab. Statist.}, {\bf 22}, 209-232.

\bibitem{C2}  B. Chauvin (1991):  Product martingales and stopping lines for branching Brownian
       motion. {\em Ann. Probab.}, {\bf 19},1195-1205.

\bibitem{CS1} Z.-Q. Chen and R. Song (1997): Intrinsic ultracontractivity and
conditional gauge for symmetric
stable processes. {\em J. Funct. Anal.} 150, 204-239.


\bibitem{D2}  R. Durrett (1996): Probability theory and examples (second edition). {\em Duxbury Press}.


\bibitem{Dy} E. B. Dynkin (1991):  Branching particle systems and superprocesses. {\em Ann. Probab.}, {\bf 19}, 1157-1194.





\bibitem{HH1}  R. Hardy and S. C. Harris (2009):
A spine approach to branching diffusions with applications to
${\sc L}^p$-convergence of martingales. {\em S\'eminaire de
Probabilit\'es}, {\bf  XLII}, 281-330.

\bibitem{H}  H. Hering (1978):
Multigroup branching diffusions.  In {\em Branching processes}, pp.
177--217, Adv. Probab. Related Topics, 5, Dekker, New York, 1978.

\bibitem{KS} H. Kesten and B. P. Stigum (1966): A limit theorem
for multidimensional Galton-Watson process. {\it Ann. Math.
Statist.}, {\bf 37}, 1211-1223.


\bibitem{KS1} P. Kim and R. Song (2008): Intrinsic untracontractivity
               of non-symmetric diffusion semigroups in bounded domains.
               {\em Tohoku Math. J.}, {\bf 60}, 527--547.

\bibitem{KS2} P. Kim and R. Song (2008): Intrinsic untracontractivity
               of non-symmetric diffusions with measure-valued
               drifts and potentials, {\em Ann. Probab.}, {\bf 36},
               1904--1945.

\bibitem{KS3} P. Kim and R. Song (2009): Intrinsic untracontractivity
               for non-symmetric L\'evy processes. {\em Forum Math.}, {\bf 21}, 43--66.

 \bibitem{KLPP} T. G. Kurtz, R. Lyons, R. Pemantle and Y. Peres
                (1997): A conceptual proof of the Kesten-Sigum theorem for
                 multitype branching processes. In  {\it Classical and Modern Branching processes}
                 (K. B. Athreya and P. Jagers, eds), {\bf 84},  181-186, Springer-Verlag,  New York.

\bibitem{Ku1} T. Kulczycki (1998): Intrinsic ultracontractivity for symmetric stable processes.
              {\em Bull. Polish Acad. Sci. Math.}, {\bf  46},  325--334.

\bibitem{KB}T. Kulczycki and B. Siudeja: Intrinsic ultracontractivity of the Feynman-Kac semigroup
            for relativistic stable processes.
            {\em Trans. Amer. Math. Soc.},  {\bf 358} (2006), 5025--5057

\bibitem{Kw} M. Kwasnicki: Intrinsic ultracontractivity for stable semigroups on
             unbounded open sets.  {\em Potential Anal.}, {\bf 31} (2009), 57--77.


\bibitem{K2}     A. E. Kyprianou (2004): Travelling wave solutions to the K-P-P equation:
alternatives to Simon Harris'
                 probabilistic analysis. {\em  Ann. Inst. H. Poincar\'e Probab. Statist.}, {\bf 40}, 53-72.


\bibitem{L}   R. Lyons (1997): A simple path to Biggins' martingale convergence for branching random walk.
              In {\em Classical and modern branching processes },
              217-221. IMA Vol. Math. Appl., 84, Springer, New York,

\bibitem{LPP}   R. Lyons, R. Pemantle and Y. Peres (1995):  Conceptual proofs of $L\log L$
                criteria for mean behavior
                of branching processes. {\em Ann. Probab.}, {\bf 23}, 1125-1138.

\bibitem{N}     J. Neveu (1986):
         Arbres et processus de Galton-Watson. {\em Ann. Inst. H. Poincar\'{e} Probab. Statist.}, {\bf 22}, 199-207.

\bibitem{RY} D. Revuz and M. Yor (1999): {\it Continuous martingales and Brownian motion, 3rd edition}.
Springer, Berlin.

\bibitem{Sc} H. H. Schaeffer (1974): {\it Banach lattices and
positive operators}. Springer, New York.




\end{thebibliography}
\end{document}